\theoremstyle{plain}
\newtheorem{lem}{Lemma}
\newtheorem*{prop*}{Proposition}
\newtheorem{prop}{Proposition}
\newtheorem{thm}{Theorem}
\newtheorem*{thm*}{Theorem}
\newtheorem*{cor*}{Corollary}
\newtheorem{cor}{Corollary}
\theoremstyle{definition}
\newtheorem{defin}{Definition}
\newtheorem*{defin*}{Definition}
\numberwithin{equation}{section}
\newcommand{\barh}{{\bar h}}
\newcommand{\alphainv}{{\alpha^{-1}}}
\newcommand{\cusp}{{\alphainv\infty}}
\newcommand{\Z}{{\mathbb Z}}
\newcommand{\Complex}{{\mathbb C}}
\newcommand{\bbH}{{\mathbb H}}
\newcommand{\R}{{\mathbb R}}
\newcommand{\Q}{{\mathbb Q}}
\newcommand{\bbR}{{\mathbb R}}
\newcommand{\SL}{{\operatorname{SL}_2(\Z)}}
\newcommand{\PSLR}{{\operatorname{PSL}_2(\R)}}
\newcommand{\GL}{{\operatorname{GL}_2(\Z)}}
\newcommand{\Rad}{\vec R_{\Gamma,w,\rho,\hat iq^{n_i}}(\tau)}
\newcommand{\Radnovec}{R_{\Gamma,w,\rho,\hat iq^{n_i}}(\tau)_j}
\newcommand{\Radalpha}{\vec R_{\Gamma,w,\rho,\hat i(\alpha q)^{n_i}}(\tau)}
\newcommand{\Ponalpha}{\vec P_{\Gamma,2-w,\bar\rho,\hat i(\alpha q)^{-n_i}}(\tau)}
\newcommand{\Radj}{R_{\Gamma,w,\rho,\hat i(\alpha q)^{n_i}}(\tau)_j}
\newcommand{\Ponj}{P_{\Gamma,2-w,\bar\rho,\hat i(\alpha q)^{-n_i}}(\tau)_j}
\newcommand{\sm}[4]{\left( \begin{smallmatrix} #1&#2\\ #3&#4 \end{smallmatrix} \right)}
\newcommand{\Ginf}{\Gamma_\infty}
\newcommand{\GG}{\Ginf\backslash\Gamma}
\newcommand{\GGstar}{\GG^*}
\newcommand{\GGG}{\GG/\Ginf}
\newcommand{\GGstarG}{\GGstar/\Ginf}
\newcommand{\GaG}{\Ginf\backslash\alpha\Gamma}
\newcommand{\GaGK}{\Ginf\backslash(\alpha\Gamma)_{K,K^2}}
\DeclareMathOperator*{\im}{Im}
\DeclareMathOperator*{\re}{Re}
\DeclareMathOperator{\Hom}{\mathscr{H}\text{\kern -3pt {\calligra\large om}}\,}
\DeclareMathOperator*{\intint}{\iint}
\DeclareMathOperator{\Kl}{Kl}
\DeclareMathOperator{\e}{e}
\DeclareMathOperator{\rad}{rad}
\begin{document}

\title{Vector-Valued Rademacher Sums and Automorphic Integrals}
\author{Daniel Whalen}

\date{\today}
\maketitle

\begin{abstract}
We present bases for certain spaces of meromorphic vector-valued rational-weight mock modular forms constructed using Rademacher sums.
\end{abstract}

\section{Introduction}

Scalar-valued modular forms have played a fundamental role in number theory for decades.  Generalizing the multiplier systems of modular forms to representations of higher dimension is a natural  extension that has appeared historically, but not in as great detail as the scalar-valued theory.  It is only recently that mathematicians have started to explore this concept rigorously in~\cite{Bantay:2007vl,Bantay:2003vm,Bantay:2013hf,Bantay:2010kc,Freitag:2012wc,Knopp:2004uq,Knopp:2003iv}.

The recent development in the theory of vector-valued modular forms has followed the discovery that they appear naturally in certain contexts.   Zhu~\cite{Zhu:1996vq} showed that the graded dimensions of the irreducible modules of  suitable vertex operator algebras are the Fourier expansions of vector-valued modular forms.  This theorem has a conjectural extension, that the graded traces of symmetries of those vertex operator algebras also form vector-valued modular forms with different multiplier systems over certain Fuchsian groups\cite{Dong:1997jw}.  The vector-valued modular forms  appear in related situations in string theory and two-dimensional conformal field theories, where they provide expressions for the torus partition functions\cite{Manschot:2007vx}.  The theory of Jacobi forms can be naturally expressed by decomposing the forms into vector-valued modular forms. 

The conjectural extension of Zhu's result, that the graded traces of vertex operator algebras are modular forms has played an important role in the study of Moonshine.  For Monstrous Moonshine, it has been proven that the graded traces of symmetries of the Monstrous vertex operator algebra were scalar-valued modular functions, and were in fact, the Hauptmodulen for certain genus 0 modular groups.\cite{Borcherds:1992bi}  Recently, the Hauptmodul property of Monstrous Moonshine has been rephrased in terms of Rademacher sums, a certain generalization of Poincar\'e series~\cite{Duncan:2011uj}.  In Monstrous Moonshine, the Rademacher sums that appear give scalar-valued modular forms, but certain discoveries by Cheng, Duncan, and Harvey suggest that the vector-valued generalization also play a role in interpreting Moonshine~\cite{Cheng:2012ue,Cheng:2013vr}.


In this paper, we prove that many of the results associated with scalar-valued Rademacher sums generalize to vector-valued Rademacher sums of weight $w\leq 0$.  In particular, we prove the following results: (1) in Theorems~\ref{Kllimit} and~\ref{cor00}, that the asymptotic behavior of the matrix-valued Kloosterman-Selberg zeta function is similar to that of the scalar case, (2) in Theorem~\ref{convthm} that a certain generalization of vector-valued Rademacher sums converges for weight $w\leq 0$, and (3) in Theorem~\ref{basis} that the Rademacher sums comprise a basis for the space of vector-valued automorphic integrals for that weight and multiplier system. 

These results are related to previous works.  In the case of $\Gamma = \SL$, the vector-valued Rademacher sums provide a separate way of constructing the modular forms that are solutions to the hypergeometric equations presented in~\cite{Bantay:2007vl}.  In doing so, they also provide a basis for the space of possible graded traces of vertex operator algebras, which are vector-valued modular functions under $\SL$.  Lastly, the Rademacher sum construction permits an expression for the asymptotic behavior of the Fourier coefficients of vector-valued modular functions and therefore the asymptotic behavior of the dimensions of irreducible representations of vertex operator algebras.

The paper is organized in the following way:  In section~\ref{s1}, we introduce the notation used and the definitions of modular forms and modular groups.  In section~\ref{s15}, we motivate and define the Rademacher sum.  In section~\ref{s2}, we introduce automorphic integrals and shadows.  In section \ref{kloostermansection} we define a generalization of the Kloosterman sum and the Kloosterman-Selberg zeta function.
In section~\ref{coefficients}, we provide explicit expressions for the Fourier coefficients in the decompositions of Rademacher sums and their shadows.   In section~\ref{s3} we prove that the Rademacher sums generate the space of automorphic integrals up to possible constant functions.  Many of the details of the proofs are confined to the appendices.  In Appendix~\ref{kloosterman}, we summarize results on the asymptotic behavior and convergence of the Kloosterman-Selberg zeta function.  In Appendix~\ref{convergence}, we prove the convergence of the vector-valued Rademacher sums of weight $w\leq0$.  In Appendix~\ref{proofoflemma}, we provide a limit on the asymptotic behavior of the matrix-valued Kloosterman-Selberg zeta function that is used in the convergence proof.  In Appendix~\ref{otherproof}, we prove similar bounds for the Kloosterman-Selberg zeta function for a particular choice of parameters.

\section{Modular Groups and Modular Forms}\label{s1}

The modular groups used in this paper will be Fuchsian groups, discrete subgroups of $\PSLR$, that have a fundamental domain of finite volume, a cusp at $\infty$, and possibly other cusps.
We will make assumptions on the allowable multiplier systems for the modular functions that will remove some generality, but not reduce the theory's applicability to vertex operator algebras or moonshine.

Let $\Gamma$ be a Fuchsian group with finite volume and a cusp at $\infty$.  
Given a cusp $\alpha^{-1}\infty$ for $\alpha\in\PSLR$ we say that the width of $\alpha^{-1}\infty$ is $\barh$, where $\bar h$ is the minimal positive real number such that $\alpha^{-1}T^{\bar h}\alpha=\alpha^{-1}\sm{1}{\bar h}{0}{1}\alpha\in\Gamma$.
In general, write  $h$ for the width of the cusp at $\infty$.  Write $\Ginf$ for the subgroup of $\Gamma$ that is generated by $T^h$ and $-I$, or equivalently, the subgroup that fixes $\infty$.

 $\Gamma$ admits an action on the upper half plane, $\bbH$, by
\begin{equation*}
\gamma(\tau) = \frac{a\tau+b}{c\tau+d}
\end{equation*}
for $\gamma=\sm{a}{b}{c}{d}\in\Gamma$, $\tau\in\bbH$.  For an arbitrary rational weight $w$, define
\begin{equation*}
j_w(\gamma,\tau) = \left(\frac{\partial\gamma(\tau)}{\partial\tau}\right)^{-w/2} = \exp\left(\frac{w}{2}\ln\left((c\tau+d)^2\right)\right).
\end{equation*}
In general, we take the logarithm along the principal branch: for $z\in\Complex$, $-\pi i <\im\ln z\leq \pi i$.  The choice of the branch in the logarithm introduces a phase into the composition equation
\begin{equation*}
j_w(\beta\alpha,\tau) = \omega_w(\alpha,\beta)^{-1}j_w(\beta,\alpha\tau)j_w(\alpha,\tau),
\end{equation*} 
where $\alpha,\beta\in\Gamma$, and $\omega_w(\alpha,\beta)$ is $\tau$-independent and of unit magnitude.  It is easy to see that if $w$ is an even integer, then $\omega_w(\alpha,\beta) = 1$ for all $\alpha,\beta\in\Gamma$.  This correction is also introduced into the multiplier systems for modular forms.

\begin{defin}{}
Let $\Gamma$ be a Fuchsian group with finite volume and a cusp at $\infty$.  A normal multiplier system of weight $w\in\Q$ is a map $\rho:\Gamma\to\operatorname{U}(d)$, the group of $d\times d$ unitary matrices, such that
\begin{itemize}
\item $\rho$ has finite image in $\operatorname{U}(d)$.
\item $\rho$ satisfies the consistancy condition: for any $\alpha,\beta\in\Gamma$ and $\tau\in\bbH$, $\rho(\alpha\beta)\omega_w(\beta,\alpha)= \rho(\alpha)\rho(\beta)$
\item for all $\gamma = \sm{1}{n}{0}{1}\in\Gamma$, $\rho(\gamma)$ is diagonal.
\end{itemize}
\end{defin}

If $w$ is an even integer, the consistency condition simplifies to $\rho(\alpha\beta) = \rho(\alpha)\rho(\beta)$ and $\rho$ is a unitary representation. 

For the rest of these notes, we will assume that all multiplier systems are normal.  While working in these systems, we require some additional notation.  We use the following: given a function $f:\Complex\to\Complex$, and $v\in\Complex^d$, write $\left[f(\vec v)\right]$ for the $d\times d$ diagonal matrix whose diagonal entries are $f(v_i)$ as $i$ ranges from $1$ to $d$.  Now, since $\rho(T^h)$ is diagonal and unitary, all of its coefficients on the diagonal are roots of unity. Define $e(x) = e^{2\pi i x}$ and write $\rho(T^h)=\left[e(\vec\mu)\right]$ for $0\leq\mu_i<1$.

This notion generalizes to cusps not at infinity.  In general, given a cusp $\alpha^{-1}\infty$ of width $\barh$ for $\alpha\in\PSLR$ and $\rho$, we fix some $\rho_\alpha$ that diagonalizes $\rho(\alphainv T^\barh\alpha)$ and let $\rho_\alpha\rho(\alphainv T^\barh\alpha)\rho_\alpha^{-1}=[e(\vec\nu)]$ for $0\leq\nu_i<1$.

\begin{defin}{}
A holomorphic modular function of weight $w\in\Q$ over $\Gamma$ with multiplier system $\rho$ is a
vector-valued holomorphic function $\vec f: \bbH \to\Complex^d$ that satisfies for any $\gamma\in\Gamma$:
\begin{equation*}
\vec f(\gamma\tau) = j_w(\gamma,\tau)\rho(\gamma)\vec f(\tau)
\end{equation*}
\end{defin}

A function is called weakly holomorphic if it has a pole of finite order in each of its components as $\tau$ approaches any cusp, but otherwise satisfies the above constraints.  

Since the modular groups that we are using have a finite width at $\infty$, the weakly holomorphic forms transform with a component-dependent phase under $\sm1h01$.  These modular functions can then be expressed in terms of their Fourier series around $\infty$.  Write 
$q=e(\tau)$.  Then

\begin{equation*}
\vec f(\tau) = \left( \begin{array}{c}
q^{(\mu_1-m_1)/h}(a_{10}+a_{11}q +\ldots)\\
q^{(\mu_2-m_2)/h}(a_{20}+a_{21}q +\ldots)\\
\vdots\\
q^{(\mu_d-m_3)/h}(a_{d0}+a_{d1}q +\ldots) \end{array} \right),
\end{equation*}
for $h$ and $\mu_i$ as defined above and for some integers $m_i$.

We use a standard notation to refer to the commonly used vector spaces of modular forms and weakly holomorphic modular forms.
Denote by $\mathcal M_w(\rho)$ the vector space of weakly holomorphic forms of weight $w$ and multiplier system $\rho$ that are holomorphic at cusps inequivalent to $\infty$.  Denote by $\mathcal M_w^m(\rho)\subset \mathcal M_w(\rho)$ the subspace with pole at infinity and equivalent cusps of degree at most $m$. Denote by $M_w(\rho)$ the space of modular forms of weight $w$ and multiplier system $\rho$, and by $S_w(\rho)$ the space of cusp forms, that is, the subspace of $M_w (\rho)$ that tends to $\vec 0$ at each of the cusps of $\Gamma$.  We will not explicitly consider the spaces of forms with cusps other than $\infty$ when counting dimensions, but the theorems generalize naturally to that case.

\section{Poincar\'e Series and Rademacher Sums}\label{s15}
Scalar Poincar\'e series are a standard construction for modular forms.  The expression of a Poincar\'e series extends naturally to vector-valued modular forms\cite{Knopp:2004uq}.  We describe the construction below.

Let $\hat{i}$ represent the $i$th  unit vector in the standard basis for $\R^d$.  Fix $\Gamma$, a Fuchsian group with finite volume and a cusp at $\infty$, and $\rho$ and normal multiplier system of weight $w\in\Q$.  Fix $0<h n_i\in\Z+\mu_i$.    For the sake of ease of notation, we write $\gamma q$ for $e(\gamma\tau)$.  Using this notation, the vector-valued Poincar\'e series is given by

\begin{equation*}
\vec P_{\Gamma,w,\rho,n_i}(\tau) = \sum_{\gamma\in\GG}j_w(\gamma,\tau)^{-1}\rho(\gamma)^{-1}\cdot \hat i\cdot(\gamma q)^{n_i}.
\end{equation*}
The sum is over representatives of the cosets in $\GG$.  Observe that this sum is well-defined; choosing a different element of the coset introduces a phase in the $\rho(\gamma)^{-1}$ term that cancels with the phase from the $q_\gamma$ term.

This series is absolutely convergent for $w>2$, but generally divergent for $w\leq2$, which is the region that is applicable to vertex operator algebras.  Rademacher found a solution to this problem by considering a related series where the summands were regularized to obtain an expression for the $j$-invariant~\cite{Rademacher:1939iz},
\begin{equation*}
j(\tau) + 12 = e(-\tau) +  \lim_{K\to\infty}\sum_{\gamma\in\GG^*_{K,K^2}} e(-\gamma\tau) - e(-\gamma\infty).
\end{equation*}

In this equation, the sum is over cosets of $\Gamma^* = \Gamma\smallsetminus\Ginf$ to remove the component where the summand would be divergent. The $K, K^2$ represents a specific order of the summation in which we sum over elements of increasing $c$ as defined below.  A technique by Niebur can be used to generalize this expression by regularizing the sums of more general Poincar\'e series~\cite{Niebur:1974vw}.  In Rademacher's case, namely $\SL$, the sum was still a modular function, but in general, we sacrifice modularity to the regularization.  We will see later that while the sums are not always modular functions, they do satisfy some automorphic property.  Niebur's expressions can be readily generalized to the vector-valued case, as we see below.

The regularization factor is defined in terms of the normalized lower incomplete
gamma function, $\boldsymbol{\gamma}$~\cite{Cheng:2012vk}. For $\gamma =\sm1001$, let $\rad^n _w(\gamma,\tau) = 1$ and otherwise
\begin{align}
\rad^n _w(\gamma,\tau) &= \frac{1}{\Gamma(1-w)}\boldsymbol\gamma\left(1-w,2\pi in(\gamma\tau-\gamma\infty\right))\\
&=e(-n\gamma\tau+n\gamma\infty)\sum^{\infty}_{m=0}\frac{(2\pi in (\gamma\tau-\gamma\infty))^{m+1-w}}{\Gamma(m+2-w)}\label{eq545}\\
&=\frac{1}{\Gamma(1-w)}\int^{2\pi in(\gamma\tau-\gamma\infty)}_0t^{-w}e^{-t}dt,
\end{align}

In the specific case that $w = 0$, this expression reduces to Rademacher's regularization~\cite{Rademacher:1939iz},
\begin{equation*}
\rad^n _0(\gamma,\tau)=1-e(-n\gamma\tau +n\gamma\infty).
\end{equation*}

The regularization factor will allow us to define a convergent Rademacher sum.  The sum will not be absolutely convergent in most cases, so we choose the order of summation carefully.  Let $\Gamma_{K,K^2}$ be the set of elements $\sm abcd\in\Gamma$ such that $0\leq c<K$ and $|d|<K^2$.  We then denote by $\GG_{K,K^2}$, the subset of $\GG$ that consists of cosets $\Ginf\gamma$ such that $\Ginf\gamma\cap\Gamma_{K,K^2}$ is nontrivial.

Fix $i$.  Take $n_i<0$ with $h n_i\in\Z+\mu_i$, and $n_i<0$.  Then,
\begin{equation*}
\vec R_{\Gamma,w,\rho,\hat iq^{n_i}}(\tau)
=\vec\Delta+\lim_{K\to\infty}\sum_{\gamma\in\GG_{K,K^2}}j_w(\gamma,\tau)^{-1}\rho(\gamma)^{-1}\rad^{n_i}_w(\gamma,\tau)\cdot \hat i\cdot(\gamma q)^{n_i},
\end{equation*}
where $\vec\Delta$ is a constant given in terms of the Kloosterman-Selberg zeta function, $\Kl$ discussed in section~\ref{kloostermansection}.  In the above case, when the cusp is at infinity, $\alpha$ is taken to be the identity element in the definition of $\Delta$.
\begin{equation*}
\Delta_j =
\begin{cases}
0 &\mbox{if } \mu_j\neq 0\\
-\frac{1}{2h}(2\pi i)^{2-w}\frac{1}{\Gamma(2-w)}
\Kl^{\alpha^{-1}\infty}_{n_i,0}(1-\frac{w}{2})_{ji}
(-n_i)^{1-w}&\mbox{if } \mu_j = 0
\end{cases}.
\end{equation*}

This definition generalizes to other cusps.  Let $\cusp$ be a cusp with width $\barh$ and corresponding $\vec\nu$ and $\rho_\alpha$.  Let $(\alpha \Gamma)_{K,K^2}$ be defined analogously to before.  As before let $\Gamma_\infty \backslash (\alpha \Gamma)_{K,K^2}$ be the set of cosets of $\alpha\Gamma$ on the left by the group generated by $T^\barh$ with nontrivial intersection with $(\alpha\Gamma)_{K,K^2}$.

Now fix $i$ and $n<0$ with $\barh n\in\Z+\nu_i$.  The Rademacher sum for a pole at at the cusp $\cusp$ is
\begin{multline*}
\Radalpha
=\vec\Delta
+\lim_{K\to\infty}\sum_{\gamma\in\GaGK}
j_w(\gamma,\tau)^{-1}
\omega_w(\alphainv,\gamma)\\
\cdot\rho(\alphainv\gamma)^{-1}\rho_\alpha^{-1}\rad^{n_i}_w(\gamma,\tau)
\cdot \hat i\cdot(\gamma q)^{n_i}.
\end{multline*}
with the constant correction as before, but evaluated at the corresponding cusp.

Convergence of the Rademacher sums in the scalar-valued case has been well-studied.  See~\cite{Niebur:1974vw} for the $w<0$ case and~\cite{Duncan:2011uj} for the $w=0$ case.  The proof of convergence in the vector-valued case is similar for $w\leq 0$.  Due to its complexity, we relegate the proof to Appendix~\ref{convergence}.

\section{Automorphic Properties of Rademacher Sums}\label{s2}
Inclusion of the regularization term generally removes the modular invariance property of the Poincar\'e sum.  The Rademacher sums, however, still enjoy a similar property.  A non-holomorphic correction term of a particular form can be added to the Rademacher sum to make the sum a non-holomorphic modular function.  Niebur showed for scalar-valued Rademacher sums that these correction terms have a simple expression in terms of the Poincar\'e series.   

We follow the approach of Niebur in his classification of scalar-valued Rademacher sums~\cite{Niebur:1974vw}.  We will omit details of the proofs in this section because they are identical to the scalar-valued case.

\begin{defin}
An automorphic integral of weight $w$, modular group $\Gamma$ and multiplier system $\rho$ is a holomorphic map  $\vec f:\mathbb H\to\mathbb C^d$, such that for some cusp form $\vec g\in S_{2-w}(\rho)$, we have that for all $\gamma\in\Gamma$,
\begin{equation}\label{autodef}
\vec f(\gamma\tau) = (c\tau+d)^w\rho(\gamma)\left(\vec f(\tau)-p(w,\gamma^{-1}\infty;\vec g)\right),
\end{equation}
where the correction term is given in terms of the shadow, $\vec g$,
\begin{equation*}
p(w,\tau;\vec g) = \frac{1}{\Gamma(1-w)}\int^{i\infty}_{-\overline{\tau}}(z+\tau)^{-w}\overline{\vec g(-\overline z)}{dz}.
\end{equation*}
and where $\vec H (\tau ) = (\vec f(\tau)-p(w, \tau;\vec g))$ is meromorphic at all of the cusps of $\Gamma$.
\end{defin}

This definition is equivalent to the standard definition of a mock modular function whose shadow is a cusp form\cite{Cheng:2012vk}.

The automorphic integrals form a vector space.  We denote the space of automorphic integrals  of weight $w$, multiplier system $\rho$, and maximum pole order of $H(\tau)$ at the cusp $i\infty$ less than or equal to $m$ as $\mathcal J_w(m,\rho)$.  We will show later that the Rademacher sums form a basis for $\mathcal J_w(m,\rho)$, but first we need to know that
\begin{thm}
For $w\leq 0$ and $hn_i\in\Z+\mu_i$ with $n_i<0$, the Rademacher sum $\Radalpha$ is an automorphic integral with shadow given by the dual Poincar\'e series $\Ponalpha$.
\end{thm}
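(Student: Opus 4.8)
The plan is to check the two conditions defining an automorphic integral: the twisted transformation law~\eqref{autodef} under every $\delta=\sm abcd\in\Gamma$, with the shadow identified as $\vec g=\Ponalpha$, and the meromorphy at the cusps of $\vec H(\tau)=\Radalpha-p(w,\tau;\vec g)$. Holomorphy of $\Radalpha$ on $\bbH$ is immediate: by Theorem~\ref{convthm} the defining series converges locally uniformly, so the sum is a locally uniform limit of holomorphic functions. Since $w\leq0$ we have $2-w\geq2$ and the index $-n_i>0$, so $\Ponalpha$ is given by an absolutely convergent Poincar\'e series and is a cusp form of weight $2-w$; in particular the period integral $p(w,\tau;\Ponalpha)$ is well defined.

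For the transformation law I would work with the truncations $\vec R_{K}$ obtained by restricting $\gamma$ to $\GaGK$, so that $\Radalpha=\vec\Delta+\lim_{K\to\infty}\vec R_{K}(\tau)$. Substituting $\tau\mapsto\delta\tau$ and re-indexing the coset sum by $\gamma\mapsto\gamma\delta$, the cocycle relation for $j_{w}$, the consistency condition for $\rho$, and the cocycle property of $\omega_{w}(\alpha^{-1},\cdot)$ combine so that $\gamma\mapsto j_{w}(\gamma,\tau)\rho(\gamma)$ acts as a genuine automorphy factor and $(c\tau+d)^{w}\rho(\delta)=j_{w}(\delta,\tau)\rho(\delta)$ may be pulled outside. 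The re-indexed sum then runs over $\GaGK\,\delta$, which coincides with $\Ginf\backslash(\alpha\Gamma)_{K',K'^{2}}$ for a suitable $K'=K'(K,\delta)\to\infty$ except on a boundary set $B_{K}$ of cosets whose representatives have lower-left entry within $O(1)$ of $K$; the transformation defect of the constant $\vec\Delta$ is likewise absorbed into the $B_{K}$-contribution. One finds that the substitution $\tau\mapsto\delta\tau$ sends $\Radalpha$ to
\begin{equation*}
(c\tau+d)^{w}\rho(\delta)\Big(\Radalpha-\lim_{K\to\infty}\sum_{\gamma\in B_{K}}(\text{summand})\Big),
\end{equation*}
so, comparing with~\eqref{autodef}, the theorem reduces to the identity $\lim_{K\to\infty}\sum_{\gamma\in B_{K}}(\text{summand})=p(w,\delta^{-1}\infty;\Ponalpha)$.

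To evaluate that limit I would insert the integral representation~\eqref{eq545} of $\rad^{n_i}_{w}$ (equivalently its complementary incomplete-gamma form), which exhibits each boundary summand as an integral of $(z+\delta^{-1}\infty)^{-w}$ against an exponential in $z$. Summing over $\gamma\in B_{K}$ and letting $K\to\infty$ unfolds the weight-$(2-w)$ dual Poincar\'e series and produces precisely the period integral $p(w,\delta^{-1}\infty;\Ponalpha)$, forcing the shadow to equal $\Ponalpha$. The interchanges of summation with integration, and of the $B_{K}$-sum with the limit $K\to\infty$, are justified by the absolute convergence of the weight-$(2-w)$ Poincar\'e series together with the asymptotic and convergence estimates for the matrix-valued Kloosterman--Selberg zeta function of Theorems~\ref{Kllimit} and~\ref{cor00} and Appendix~\ref{kloosterman}, and by the decay of the regularization factor. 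Meromorphy of $\vec H$ at each cusp then follows from the explicit Fourier expansions of $\Radalpha$ and of $p(w,\tau;\Ponalpha)$ computed in Section~\ref{coefficients}: the non-holomorphic pieces --- those built from incomplete-gamma functions and Kloosterman--Selberg values --- cancel between the two, leaving a $q$-expansion with a pole of finite order.

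The main obstacle is the boundary analysis: pinning down the combinatorics of how $\GaGK\,\delta$ deviates from a $K'$-truncation, and, more seriously, showing the $B_{K}$-contributions are dominated uniformly in $K$ so that the limit may be moved inside the integral and the sum; both rest on the Kloosterman--Selberg bounds of the appendices. The case $w=0$ needs separate bookkeeping, since $\rad^{n_i}_{0}(\gamma,\tau)=1-e(-n_i\gamma\tau+n_i\gamma\infty)$ degenerates and the nonzero ($\mu_j=0$) components of $\vec\Delta$ must be tracked through the substitution; this is the vector-valued transcription of the scalar $w=0$ argument in~\cite{Duncan:2011uj}, just as the $w<0$ case transcribes Niebur~\cite{Niebur:1974vw}, the only genuinely new ingredient being the matrix-valued Kloosterman--Selberg estimates.
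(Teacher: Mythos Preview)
Your proposal is correct and follows the same approach the paper invokes: the paper's own proof consists of a single sentence deferring to Niebur~\cite{Niebur:1974vw} (for $w<0$) and to~\cite{Cheng:2012jl} (for the sketch), noting only that the scalar argument carries over verbatim with the $\vec\Delta$ correction accounting for the reordering. Your sketch is precisely a transcription of that Niebur/Duncan--Frenkel boundary-defect argument to the vector-valued setting, and you have in fact supplied more detail than the paper itself.
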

\begin{proof}
The automorphic properties follow through the same logic as in the scalar-valued case with the pole at $\infty$.  See~\cite{Niebur:1974vw} for a rigorous proof and a discussion of the $\vec\Delta$ term, which compensates for the reordering, or~\cite{Cheng:2012jl} for a more accessible sketch.
\end{proof}

\section{Kloosterman Sums and the Kloosterman-Selberg Zeta Function}\label{kloostermansection}

The expressions for the Rademacher sums are given in terms of matrix-valued Kloosterman sums.

\begin{defin}\label{Sdef}
Fix $\Gamma$ with a cusp at $\infty$ of width $h$, a multiplier system $\rho$ of weight $w$, a cusp $\cusp$ of width $\barh$ and corresponding $\rho_\alpha$, components $i,j$ and $k_j, n_i$ such that $hk_j\in\Z+\mu_j$ and $\barh n_i\in\Z+\nu_i$.  The Kloosterman sum for a $c'>0$ is given by
\begin{equation*}
S^\cusp_{n_i,k_j}(c')_{ji}
= \sum_{\substack{\gamma\in\GaG/\Ginf\\c(\gamma)=c'}}\e(n_i\gamma\infty-k_j\gamma^{-1}\infty)\{\omega_w(\alpha^{-1},\gamma)\rho^{-1}(\alpha^{-1}\gamma)\rho_\alpha^{-1}\}_{ji}.
\end{equation*}
The sum is over the double cosets $\gamma\in\GaG/\Ginf$, where the left $\Ginf$ corresponds to width $\barh$ and the right corresponds to width $h$, further restricted by requiring that $\gamma=\sm abcd$ satisfies $c=c'$.  The $\{M\}_{ji}$ indicates the $(j,i)$ coefficient of the matrix $M$.
\end{defin}

Observe that left and right multiplication by $T^\barh$ and $T^h$ respectively do not change $c$ and leave the summand invariant, so the sum is well-defined.  It is necessarily the case that the number of such double cosets is finite for fixed $c$, and in fact is $O(c)$ because of the finite volume condition on the Fuchsian group~\cite{Iwaniec:2002vu}.

At this cusp at infinity this definition corresponds to the standard definition.  For a scalar multiplier system $\rho$,
\begin{equation*}
S^\infty_{n,k}(c',\rho)=\sum_{\substack{\gamma\in\GGG\\c(\gamma)=c'}}e(n\gamma\infty-k\gamma^{-1}\infty)\rho^{-1}(\gamma)
\end{equation*}

%

\begin{defin}\label{Kl}
Fix $\Gamma$ with a cusp at $\infty$ of width $h$, a multiplier system $\rho$ of weight $w$, a cusp $\cusp$ of width $\barh$ and corresponding $\rho_\alpha$, components $i,j$ and $k_j, n_i$ such that $hk_j\in\Z+\mu_j$ and $\barh n_i\in\Z+\nu_j$.  The Kloosterman-Selberg zeta function at $s\in\Complex$ is given by
\begin{equation*}
\Kl^\cusp_{n_i,k_j}(s,\rho)_{ji}
= \sum_{c>0}\frac{S^\cusp_{n_i,k_j}(c',\rho)_{ji}}{c^{2s}}
\end{equation*}
\end{defin}
The Kloosterman-Selberg zeta function converges absolutely for $\re\alpha>1$.  The asymptotics of this function will be important for the convergence proof for the Rademacher sums and are discussed more in Appendix~\ref{kloosterman}.

\section{Fourier Coefficients of Rademacher Sums}\label{coefficients}

The Fourier coefficients of the Rademacher sums have been used to understand the dimensions of vertex operator algebras and the graded traces of automorphisms of these spaces.  The standard expressions for the Fourier coefficients permit a generalization to the vector-valued case.


As we see in Appendix~\ref{convergence}, the expression of vector-valued Rademacher sums as a $q$-series can be found via Lipschitz summation and application of the infinite sum expression for the Bessel $J$ function,

\begin{equation*}
J_\alpha(z) =\sum_{m=0}^\infty\frac{(-1)^m}{m!\Gamma(m+\alpha+1)}\left(\frac{z}{2}\right)^{2m+\alpha} .
\end{equation*}

Using the Kloosterman sums, we find the following expressions for the Rademacher sum and its shadow.

\begin{thm}\label{fouriercoefficients}
Fix $\Gamma$ with a cusp at $\infty$ of width $h$, a multiplier system $\rho$ of weight $w$, a cusp $\cusp$ of width $\barh$ and corresponding $\rho_\alpha$, components $i,j$ and $n_i$ such that $\barh n_i\in\Z+\nu_j$, and $n_i<0$.  Let $\bar\rho$ be the conjugate multiplier system, where $\bar\rho(\gamma) = \overline{\rho(\gamma)}$ with corresponding $\vec\mu'$ and $\vec\nu'$.  Let $\vec\delta_\alpha=1$ if $\cusp=\infty$ and 0 otherwise.  Let $\Delta$ be as above.  The $j$th components of the Rademacher sum and its shadow Poincar\'e sum are given by

\begin{multline*}
\Radj= 
\delta_\alpha \{\rho_\alpha^{-1}\}_{ji}  q^{n_i}+2\Delta
+\sum_{\substack{0< k_j\\hk_j\in\Z+\mu_j}} q^{k_j}\sum_{c>0}S^\cusp_{n_i,k_j}(c,\rho)_{ji}\\
\cdot \frac{-2\pi i}{ch}\left(-\frac{k_j}{n_i}\right)^{\frac{w-1}{2}}
J_{1-w}\left(\frac{4 \pi i}{c} \sqrt{-k_jn_i}\right)
\end{multline*}
\begin{multline*}
\Ponj= \delta_\alpha \{\rho_\alpha^{-1}\}_{ji}  q^{-n_i}
+\sum_{\substack{0<k_j\\hk_j\in\Z+\mu_j'}} q^{k_j}\sum_{c>0}S^\cusp_{n_i,k_j}(c,\bar\rho)_{ji}\\
\cdot \frac{2\pi i^{w-2}}{ch}\left(-\frac{k_j}{n_i}\right)^{\frac{1-w}{2}}
J_{1-w}\left(\frac{4 \pi}{c}  \sqrt{-k_jn_i}\right) 
\end{multline*}
\end{thm}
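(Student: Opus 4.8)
The plan is to compute the Fourier expansion of the Rademacher sum directly from its defining series, following the classical Rademacher--Niebur method but tracking the multiplier matrices. First I would split the sum $\Radalpha$ into the $\gamma\in\Ginf$ contribution, which (when $\cusp=\infty$) produces the principal part $\{\rho_\alpha^{-1}\}_{ji}q^{n_i}$ and otherwise vanishes, plus the $\vec\Delta$ term, plus the tail over cosets with $c(\gamma)>0$. For the tail, I would organize the sum by the value of $c=c(\gamma)$ and, within each $c$, by the residue classes of $a$ and $d$ modulo the cusp widths; summing the leftover translations via the Lipschitz summation formula
\begin{equation*}
\sum_{n\in\Z}(\tau+n)^{-s}e(-n x)=\frac{(-2\pi i)^{s}}{\Gamma(s)}\sum_{\ell\geq 0}(\ell+x)^{s-1}e((\ell+x)\tau)
\end{equation*}
converts each $c$-block into a $q$-series whose coefficients are, up to explicit constants, the Kloosterman sums $S^\cusp_{n_i,k_j}(c,\rho)_{ji}$ from Definition~\ref{Sdef}.

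The key computational step is the identification of the resulting integral/series in the regularization factor $\rad^{n_i}_w(\gamma,\tau)$ with a Bessel $J$-function. Using the series representation~\eqref{eq545} for $\rad^{n_i}_w$, combined with the Lipschitz sum, one gets after interchanging the order of summation an inner sum over $m\geq 0$ of terms $\tfrac{(2\pi i)^{\cdots}}{\Gamma(m+2-w)\,m!}(-k_jn_i/c^2)^{m}$; matching this against the defining series for $J_{1-w}$ stated in the excerpt gives the factor $(-k_j/n_i)^{(w-1)/2}J_{1-w}\!\left(\tfrac{4\pi i}{c}\sqrt{-k_jn_i}\right)$ together with the prefactor $-2\pi i/(ch)$, where the $1/h$ comes from the density of the lattice of allowed exponents at the cusp at $\infty$. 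The $2\Delta$ appears because, as in Niebur's treatment, the constant term ($k_j=0$, i.e. $\mu_j=0$) of the Lipschitz-summed series must be handled as a boundary case and combines with the $\vec\Delta$ already present in the definition. For the shadow $\Ponalpha$, I would run the same computation on the dual Poincaré series $\vec P_{\Gamma,2-w,\bar\rho,\cdots}$, which is absolutely convergent (since $2-w\geq 2$) so no regularization or reordering is needed; here the weight $2-w$, the conjugate system $\bar\rho$, and the sign flip in the exponents account for the differences in the stated formula, in particular the real argument $\tfrac{4\pi}{c}\sqrt{-k_jn_i}$ of the Bessel function and the phase $i^{w-2}$.

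The main obstacle I anticipate is bookkeeping rather than conceptual: making the interchange of summations rigorous for $w\leq 0$ where the Rademacher sum is only conditionally convergent in the $K,K^2$ ordering, and checking that the Lipschitz summation may be applied termwise within each fixed-$c$ block while the limit $K\to\infty$ is taken outside. This is exactly the estimate carried out in Appendix~\ref{convergence} using the asymptotics of the Kloosterman--Selberg zeta function $\Kl^\cusp_{n_i,k_j}(s,\rho)$ from Appendix~\ref{kloosterman}, so I would invoke those bounds to justify absolute convergence of the final double series $\sum_{k_j}\sum_{c}$ and hence legitimize all rearrangements. A secondary point requiring care is the precise matching of unit-magnitude phases $\omega_w(\alphainv,\gamma)$ and the placement of $\rho_\alpha^{-1}$: one must verify that the phase introduced by choosing coset representatives in $\GaGK$ versus the double-coset representatives in Definition~\ref{Sdef} cancels correctly so that the coefficient really is the $(j,i)$ entry of $S^\cusp_{n_i,k_j}(c,\rho)$ as written.
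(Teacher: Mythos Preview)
Your proposal is correct and matches the paper's approach. The paper's proof of Theorem~\ref{fouriercoefficients} is a two-line citation: it points to the proof of Theorem~\ref{convthm} (Appendix~\ref{convergence}) for the Rademacher sum and to Theorem~3.2 of Knopp--Mason for the Poincar\'e series; your sketch---split off the $c=0$ contribution, organize the tail by $c$, apply Lipschitz summation termwise to the series~\eqref{eq545} for $\rad^{n_i}_w$, recognize the $m$-sum as the power series for $J_{1-w}$, and justify the rearrangements via the Kloosterman--Selberg bounds---is exactly what Appendix~\ref{convergence} carries out, including the appearance of the second $\Delta_j$ from the $m=0$, $\mu_j=0$ boundary case of the Lipschitz formula.
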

\begin{proof}
I forego the proof in the case that the cusp is not at $\infty$.  When the cusp is at $\infty$, the coefficients for the Rademacher sum follow from the proof of Theorem~\ref{convthm}.  The coefficients for the Poincar\'e series were computed in Theorem 3.2 of~\cite{Knopp:2004uq}.
\end{proof}


This expression immediately gives the asymptotic behavior of the coefficients of the Rademacher sum, $\Radj$ using the known asymptotic behavior of the Bessel $J$ function.  For $z$ real, $J_\alpha(iz)\sim \frac{e^z}{\sqrt{2\pi z}}$, so the smallest $c$ term dominates for large $k$.  Let $c$ be the least positive such that $S^\cusp_{n_i,k_j}(c,\rho)_{ji}\neq 0$.  Then the $q^{k_j}$ coefficient in the Rademacher sum has asymptotic form

\begin{equation}\label{asymptotics}
 S^\cusp_{n_i,k_j}(c,\rho)_{ji}
\frac{-i}{\sqrt{2c}h}\frac{k_j^{\frac{2w-3}{4}}}{(-n_i)^{\frac{2w-1}{4}}}
\exp\left(\frac{4 \pi  \sqrt{-k_jn_i}}{c}\right).
\end{equation}

\section{Dimension of Spaces of Automorphic Integrals}\label{s3}
The dimension of the space of automorphic integrals can be computed explicitly in terms of dimensions of spaces of modular forms and the dimensions of the spaces of cusp forms.  Consider the space, $\mathcal J_w(m, \rho)$, of automorphic integrals that have a pole of order at most $m/h$ at $\infty$ and are holomorphic at cusps inequivalent to $\infty$.  There is a linear map, $\lambda$, that takes an automorphic integral $\vec f$ with shadow $\vec g$ and weight $w$ to $\lambda\vec f(\tau) = p(w,\tau;\vec g)$.  The kernel of $\lambda$ consists of modular functions with a single pole of order at most $m/h$ at $\infty$ by Equation~\ref{autodef}.   This is just the space $M^m_w(\rho)$.  The dimension of the image is bounded by the dimension of the space of cusp forms, $\mathcal S_{2-w}(\bar\rho)$.  Together, these give the bound,

\begin{equation*}
\dim\mathcal J_w(m, \rho)\leq\dim\mathcal S_{2-w}(\bar\rho) + \dim M^m_w(\rho).
\end{equation*}

The dimensions of space of vector-valued automorphic forms are known.  They have been computed using the Riemann--Roch theorem for coherent sheaves~\cite{Freitag:2012wc} and previously using the Selberg trace formula~\cite{Skoruppa:1985vi}.  We will use the Riemann--Roch theorem in what follows.
\begin{thm}{(Riemann-Roch)}
Let $\mathcal M$ be a coherent sheaf on a compact Riemann surface $X$ and $\Omega$ be the canonical sheaf.  Then,
\begin{equation*}
\dim H^0(X,\mathcal M) - \dim H^0(X,\Hom_{\mathcal O_X}(\mathcal M,\Omega)) = \operatorname{deg}(\mathcal M) + \operatorname{Rank}(\mathcal M)(1-g)
\end{equation*}
\end{thm}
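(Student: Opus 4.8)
\emph{Proof proposal.} Since $X$ is a compact Riemann surface --- equivalently, a smooth projective curve over $\Complex$ --- every coherent sheaf $\mathcal M$ has $H^i(X,\mathcal M)=0$ for $i\geq 2$; put $\chi(\mathcal M)=\dim H^0(X,\mathcal M)-\dim H^1(X,\mathcal M)$. The plan is to split the statement into two pieces to be proved separately: (i) Serre duality, the natural isomorphism $H^1(X,\mathcal M)\cong H^0(X,\Hom_{\mathcal O_X}(\mathcal M,\Omega))^{\vee}$, which turns the claim into the Euler-characteristic identity $\chi(\mathcal M)=\deg(\mathcal M)+\operatorname{Rank}(\mathcal M)(1-g)$; and (ii) that identity itself, which I would attack by d\'evissage. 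I take up (ii) first.

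From the long exact sequence in cohomology, $\chi$ is additive on short exact sequences $0\to\mathcal M'\to\mathcal M\to\mathcal M''\to 0$, and so are $\deg$ and $\operatorname{Rank}$; hence it suffices to verify the identity on sheaves that generate the Grothendieck group of $X$. On a smooth curve every coherent $\mathcal M$ sits in $0\to\mathcal T\to\mathcal M\to\mathcal F\to 0$ with $\mathcal T$ torsion and $\mathcal F$ locally free, $\mathcal T$ is filtered by skyscrapers $\Complex_p$, and a locally free sheaf of rank $r\geq 1$ contains a saturated sub-line-bundle $L$ --- saturate the image of a nonzero section of a sufficiently positive twist --- with locally free quotient of rank $r-1$, so induction on the rank reduces everything to two cases. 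For a skyscraper, $\chi(\Complex_p)=1=\deg(\Complex_p)$ while $\operatorname{Rank}(\Complex_p)=0$, so the identity holds. For a line bundle $L=\mathcal O(D)$, induct on $\deg D$ via $0\to\mathcal O(D)\to\mathcal O(D+p)\to\Complex_p\to 0$, which gives $\chi(\mathcal O(D+p))=\chi(\mathcal O(D))+1$ (and symmetrically for deleting a point); the induction is anchored by $\chi(\mathcal O_X)=1-g$, which is the algebraic definition of the genus and agrees with the topological genus since, by the Dolbeault isomorphism and Hodge theory on $X$, $H^1(X,\mathcal O_X)$ is conjugate-linearly isomorphic to the $g$-dimensional space $H^0(X,\Omega)$.

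For (i) I would construct the trace pairing
\begin{equation*}
H^1(X,\mathcal M)\times H^0\!\left(X,\Hom_{\mathcal O_X}(\mathcal M,\Omega)\right)\longrightarrow H^1(X,\Omega)\xrightarrow{\ \sim\ }\Complex
\end{equation*}
and prove it is perfect. Running the same d\'evissage as above, together with the five-lemma (or the derived-category formalism) applied to the induced ladders of pairings, reduces perfectness to the case $\mathcal M=\mathcal O(D)$, i.e.\ to $H^1(X,\mathcal O(D))^{\vee}\cong H^0(X,\Omega(-D))$ --- the classical statement, provable through repartitions/ad\`eles on the curve or, analytically, through Hodge theory for $\bar\partial$ on $X$ with the $L^2$-pairing.

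The main obstacle is (i): additivity of $\chi$, the d\'evissage, and the anchor $\chi(\mathcal O_X)=1-g$ are formal or definitional, but Serre duality needs genuine input beyond homological algebra --- the finite-dimensionality of $H^0$ and $H^1$ of coherent sheaves on $X$ and the construction of the residue/trace isomorphism $H^1(X,\Omega)\xrightarrow{\sim}\Complex$ --- which one supplies either from elliptic (Hodge) theory on the compact Riemann surface or from the ad\`elic description of line bundles on a curve. Since all of this is standard, in the body of the paper I would cite it and use it in the Euler-characteristic form displayed above.
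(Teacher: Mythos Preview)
Your sketch is a correct and standard route to Riemann--Roch for coherent sheaves on a curve: additivity of $\chi$, d\'evissage through torsion and line bundles, the anchor $\chi(\mathcal O_X)=1-g$, and Serre duality supplied either analytically or ad\`elically. Nothing is wrong with it.

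However, there is nothing to compare against. The paper does not prove this theorem; it is quoted as a classical input and immediately applied (via the degree computations of~\cite{Freitag:2012wc}) to count dimensions of the relevant spaces of vector-valued forms. So your proposal is not a different approach from the paper's --- it is simply a proof where the paper offers none. If you were writing this section, the appropriate move is exactly what you say at the end of your proposal: cite the result and use it in Euler-characteristic form, rather than reproduce a proof of Riemann--Roch in a paper on Rademacher sums.
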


Let $X=(\bbH/\Gamma)\cup S$ where $S$ is the set of cusps of $\Gamma$.  Let $\mathcal M$ be a sheaf on $X$, such that for an open set $U\subset X$, $\mathcal M(U)$ consists of the local automorphic forms on $\tilde U\setminus S$ of weight $w$, multiplier system $\rho$ that are regular at the cusps, with the possible exception of the cusp at $\infty$, where it has a pole of degree at most $m$.  For sufficiently large $m$, 
Proposition 3.6 of~\cite{Freitag:2012wc} gives us that
\begin{align*}
\dim\mathcal{M}^m_w(\rho)  &= \dim H^0(X,\mathcal M) \leq  \operatorname{deg}(\mathcal M) + \operatorname{Rank}(\mathcal M)(1-g)\\
&\leq md+ d(1-g) +dw\left(g-1+\frac{|S|}{2} + \frac{1}{2}\sum_{b\in X_\Gamma\smallsetminus S}(1-\frac{1}{\e(\pi,b)})\right) - \sum_{x\in X_\Gamma}\sigma_\rho(x,w)
\end{align*}
for the elliptic points $X_\Gamma$, where $\e(\pi,b)$ is the order of the elliptic point $b$. $\sigma_\rho(x,r)$ is the sum of the characteristic numbers of the multiplier system at $x$, as defined in Lemma 3.1 of~\cite{Freitag:2012wc}.  $\sigma$ possess the property that $\sigma_\rho(x,w)+\sigma_{\bar\rho}(x,2-w)=d(1-1/\e(\pi,x))$, unless $x$ is a cusp. If $x$ is a cusp, then   $\sigma_\rho(x,r) = d-\dim V^{\rho(\Gamma_x)}$, where $V^{\rho(\Gamma_x)}$ is the fixed space of $\Complex^d$ that is preserved by $\rho(\gamma)$ for all $\gamma$ that preserve $x$.  Let $t_0$ be the dimension of the invariant subspace of $\rho(\Gamma)$. Similarly, 
\begin{align*}
\dim\mathcal{S}_{2-w}(\bar\rho) &= d(1-g) + d(2-w)\left(g-1+\frac{|S|}{2} + \frac{1}{2}\sum_{b\in X_\Gamma\smallsetminus S}(1-\frac{1}{\e(\pi,b)})\right) \\
 &\qquad {} - \sum_{x\in X_\Gamma}\sigma_{\bar\rho}(x,2-w)+t_0 \delta_{w,0} - \sum_{x\in S}\dim V^{\bar\rho(\Gamma_x)}
\end{align*}

Using these results, we see that for arbitrary $\Gamma$, rational weight $w$, multiplier system $\rho$ of dimension $d$, and for sufficiently large $m$, 
\begin{equation}\label{dim1}
\dim\mathcal J_w(m, \rho)\leq\dim\mathcal{M}^m_w(\rho) + \dim\mathcal{S}_{2-w}(\bar\rho) \leq md,
\end{equation}
unless $w=0$, in which case,
\begin{equation}\label{dim2}
\dim\mathcal J_w(m, \rho)\leq\dim\mathcal{M}^m_w(\rho) + \dim\mathcal{S}_{2-w}(\bar\rho) \leq md + t_0,
\end{equation}
where $t_0$ is the dimension of the invariant subspace of $\rho(\Gamma)$.

We will see that these bounds are exact.  Consider the set of Rademacher sums, $\Rad$ as $i$ ranges from $1$ to $d$ and as $n$ ranges from $(\mu_i-1)/h$ to $(\mu_i-m)/h$.  Observe from the $q$-series expansion that these Rademacher sums have pole of order $n$ in the  $i$th component.   Since those are different poles for each of the $i$ and $n$, the Rademacher sums must be linearly independent.  There are exactly $md$ of these sums, all of which lie in $\mathcal J_w(m, \rho)$.

When $w=0$, then $w_0$ is the dimension of the invariant subspace under $\rho(\Gamma)$.  The invariant vectors can be taken to be constant vector-valued functions over $\bbH$.   Because these functions are invariant under $\rho(\Gamma)$ and $w=0$, these constant functions are modular forms under $\Gamma$, and thus are automorphic integrals in $\mathcal J(m,\rho)$.  Because they do not diverge at the cusps, these functions must be linearly independent from the Rademacher sums.

Together with equations~\ref{dim1} and~\ref{dim2}, we see that the inequality is exact and obtain the following theorem.

\begin{thm}\label{basis}
The space of automorphic forms, $\mathcal J(m,\rho)$ is spanned by $\Rad$
 and $0\leq i\leq d$ and $m<n<0$ with $hn\in\Z+\mu_i$.  If $w=0$, the space of automorphic forms is spanned by $\Rad$ along with the constant modular forms defined above.
\end{thm}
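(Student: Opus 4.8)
The plan is to sandwich $\dim\mathcal J_w(m,\rho)$ between matching upper and lower bounds, following the outline preceding the statement. For the upper bound I would analyze the linear ``shadow map'' $\lambda:\mathcal J_w(m,\rho)\to\mathcal S_{2-w}(\bar\rho)$ that sends an automorphic integral $\vec f$ to its shadow $\vec g$. This map is well defined: by~\eqref{autodef} the correction $p(w,\cdot;\vec g)$ is determined by the failure of $\vec f$ to transform modularly, and the period integral recovers $\vec g$ from $p(w,\cdot;\vec g)$, so moreover $\vec g\mapsto p(w,\cdot;\vec g)$ is injective. The kernel of $\lambda$ consists exactly of the $\vec f$ with vanishing correction term, i.e.\ the genuine weakly holomorphic forms of weight $w$ and multiplier $\rho$, holomorphic at cusps inequivalent to $\infty$ and with a pole of order at most $m/h$ at $\infty$; this is $\mathcal M^m_w(\rho)$. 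Rank--nullity then gives $\dim\mathcal J_w(m,\rho)\le\dim\mathcal M^m_w(\rho)+\dim\mathcal S_{2-w}(\bar\rho)$.

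Next I would insert the Riemann--Roch dimension formulas for $\dim\mathcal M^m_w(\rho)$ and $\dim\mathcal S_{2-w}(\bar\rho)$ quoted from~\cite{Freitag:2012wc}, valid once $m$ is large enough that $H^0(X,\Hom_{\mathcal O_X}(\mathcal M,\Omega))$ vanishes. Adding the two expressions: the terms carrying the factor $g-1+\tfrac{|S|}2+\tfrac12\sum_b(1-1/\e(\pi,b))$ occur with combined coefficient $dw+d(2-w)=2d$; the $d(1-g)$ terms from the two formulas cancel the $2d(g-1)$ produced by that factor; the elliptic-point contributions collapse via $\sigma_\rho(x,w)+\sigma_{\bar\rho}(x,2-w)=d(1-1/\e(\pi,x))$; and the cusp terms reduce to a manifestly non-positive quantity plus the $t_0\delta_{w,0}$ from the invariant subspace. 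Discarding the non-positive cusp part yields exactly~\eqref{dim1} and~\eqref{dim2}: $\dim\mathcal J_w(m,\rho)\le md$, and $\le md+t_0$ when $w=0$.

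It then remains to produce that many linearly independent elements. By the theorem identifying $\Rad$ as an automorphic integral with shadow $\Pon$, each $\Rad$ with $1\le i\le d$ and $hn\in\Z+\mu_i$, $m<n<0$, lies in $\mathcal J_w(m,\rho)$, and there are exactly $md$ such pairs $(i,n)$. They are linearly independent because, by Theorem~\ref{fouriercoefficients}, the principal part at $\infty$ of $\Rad$ is precisely $\hat i\,q^{n}$, carried by the $i$th component alone (when the cusp is $\infty$ one may take $\rho_\alpha$ to be the identity, so $\{\rho_\alpha^{-1}\}_{ji}=\delta_{ji}$), and these $md$ principal parts are pairwise distinct, so no nontrivial combination can vanish. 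When $w=0$, the $t_0$-dimensional space of $\rho(\Gamma)$-invariant vectors, regarded as constant functions on $\bbH$, consists of holomorphic modular forms (as $j_0\equiv1$), hence of automorphic integrals with zero shadow; being bounded at every cusp, they are independent of the pole-bearing Rademacher sums. Together with the upper bounds, the inequalities become equalities, so the listed Rademacher sums---together with the $t_0$ constants when $w=0$---form a basis of $\mathcal J_w(m,\rho)$.

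The main obstacle, beyond the convergence of the sums (Theorem~\ref{convthm}) and their automorphy (the theorem just cited), both already in hand, is making the upper bound fully rigorous: verifying that $\lambda$ is well defined and that $\vec g\mapsto p(w,\cdot;\vec g)$ is injective, so that $\ker\lambda$ \emph{equals} $\mathcal M^m_w(\rho)$ and not merely contains it, and pushing the Riemann--Roch bookkeeping far enough to see that the two dimension formulas cancel down to the stated bounds. This is also the point at which one must assume $m$ sufficiently large, as in the discussion preceding the theorem: for small $m$ the Riemann--Roch inequality can fail to be sharp, so $\dim\mathcal J_w(m,\rho)$ may exceed $md$ (resp.\ $md+t_0$) and the Rademacher sums need not span.
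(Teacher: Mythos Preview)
Your proposal is correct and follows essentially the same approach as the paper: bound $\dim\mathcal J_w(m,\rho)$ above via the shadow map and the Riemann--Roch computations of $\dim\mathcal M^m_w(\rho)$ and $\dim\mathcal S_{2-w}(\bar\rho)$, then exhibit $md$ (resp.\ $md+t_0$) linearly independent elements via the distinct principal parts of the Rademacher sums (plus the constant invariants when $w=0$). The only cosmetic difference is that the paper defines $\lambda$ as $\vec f\mapsto p(w,\tau;\vec g)$ rather than $\vec f\mapsto\vec g$, but as you note these are equivalent by the injectivity of $\vec g\mapsto p(w,\cdot;\vec g)$.
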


The theorem extends to the space of weakly holomorphic modular functions where poles of order at most $m$ can exist at any of the cusps.  In that case, the bases consist of Rademacher sums evaluated with poles at any of the cusps along with any constant functions.

\section*{Acknowledgements}
I am grateful to John Duncan, Miranda Cheng, Francesca Ferrari, Natalie Paquette, and Shamit Kachru for useful discussion and invaluable assistance preparing this manuscript.

%
%

\appendix
\section{Asymptotics of Kloosterman-Selberg Zeta Functions}\label{kloosterman}

The asymptotic behavior and convergence of the Kloosterman-Selberg zeta functions will be necessary to bound the terms in the Rademacher sum.  These limits will be used in the convergence proof in Appendix~\ref{convergence}.

In this appendix, we will sacrifice generality for the sake of ease of presentation and will assume that the Rademacher sums and the Kloosterman sums are generated by the cusp at $\infty$.  The $\infty$ will be suppressed in the notation.

Fix a finite volume Fuchsian group $\Gamma\in\GL$ with cusp at $\infty$ and normal representation $\rho$.  Let $h$ be the width of the cusp at $\infty$, and $\rho(T^h) = [e(\vec\mu)]$ for $0\leq\mu_i<1$.   Recall the Kloosterman sum from Definition~\ref{Sdef}.  We can similarly define the matrix valued Kloosterman-Selberg zeta function.
\begin{defin}
For $c'>0$, and $hn\in\Z+\mu_j,  hm\in\Z+\mu_i$, the coefficients of the matrix-valued Kloosterman-Selberg zeta function are given in terms of the components of the Kloosterman-Selberg sum by
\begin{equation*}
\Kl_{m_i,n_j}(s,\rho)_{ji,<K} = \sum_{0<c<K}\frac{S_{m_i,n_j}(c,\rho)_{ji}}{c^{2s}}
\end{equation*}
and $\Kl_{m_i,n_j}(s,\rho) = \lim_{K\to\infty}\Kl_{m_i,n_j}(s,\rho)_{ij,<K} $ when the limit converges.
\end{defin}

There are trivial bounds on the sums and zeta functions that arise from the finite volume of $\Gamma$~\cite{Iwaniec:2002vu}.

\begin{thm}\label{trivial}
For $\Gamma$ a Fuchsian group with finite volume, and $\rho$ unitary, the Kloosterman sums satisfy ${S_{m_i,n_j}(c,\rho)_{j,i}= O\left(c^2\right)}$ and $\Kl_{m_i,n_j}\left(\frac{1}{2}\right)_{ji,<K} = O(K)$.  The expression $\Kl_{m_i,n_j}(s,\rho)$ converges absolutely for $\re s>1$.
\end{thm}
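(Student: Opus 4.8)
The plan is to bound everything using two elementary ingredients: a uniform modulus bound of $1$ on each summand, and a polynomial bound on the number of relevant double cosets coming from the finite covolume of $\Gamma$. First I would record that, with the cusp at $\infty$, a double-coset representative $\gamma=\sm abcd\in\GGG$ with $c(\gamma)=c$ contributes $\e(m_i\gamma\infty-n_j\gamma^{-1}\infty)\{\rho^{-1}(\gamma)\}_{ji}=\e((m_i a+n_j d)/c)\{\rho^{-1}(\gamma)\}_{ji}$ to $S_{m_i,n_j}(c,\rho)_{ji}$ (at the cusp $\infty$ the factors $\omega_w$ and $\rho_\alpha$ appearing in Definition~\ref{Sdef} are trivial, since $\rho(T^h)$ is already diagonal). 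The argument of the exponential is real, so that factor has modulus $1$, and $\rho^{-1}(\gamma)\in\operatorname{U}(d)$ gives $|\{\rho^{-1}(\gamma)\}_{ji}|\le1$; hence each term has modulus at most $1$, and writing $N(c):=\#\{\gamma\in\GGG:c(\gamma)=c\}$ we obtain $|S_{m_i,n_j}(c,\rho)_{ji}|\le N(c)$.

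Next I would invoke the counting estimate coming from the finite-volume hypothesis: the admissible moduli form a discrete subset of $(0,\infty)$, and the lattice-point bound cited in Section~\ref{kloostermansection} to~\cite{Iwaniec:2002vu} gives $N^*(X):=\#\{\gamma\in\GGG:0<c(\gamma)\le X\}=\sum_{0<c\le X}N(c)=O_\Gamma(X^2)$. In particular $N(c)\le N^*(c)=O(c^2)$, which together with the summand bound yields $S_{m_i,n_j}(c,\rho)_{ji}=O(c^2)$, the first assertion. For the other two I would use partial (Riemann--Stieltjes) summation over the discrete modulus set. Put $A(t)=\sum_{0<c<t}S_{m_i,n_j}(c,\rho)_{ji}$, so the triangle inequality gives $|A(t)|\le N^*(t)=O(t^2)$; then the identity $c^{-1}=K^{-1}+\int_c^K t^{-2}\,dt$ yields
\[
\Kl_{m_i,n_j}\!\left(\tfrac12,\rho\right)_{ji,<K}=\sum_{0<c<K}\frac{S_{m_i,n_j}(c,\rho)_{ji}}{c}=\frac{A(K)}{K}+\int_{c_0}^{K}\frac{A(t)}{t^{2}}\,dt=O(K)+\int_{c_0}^{K}O(1)\,dt=O(K),
\]
where $c_0$ is the smallest modulus. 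The same computation with $c^{-2s}$ in place of $c^{-1}$, or equivalently partial summation of $\sum_{c}N(c)c^{-2\re s}$ against $N^*(X)=O(X^2)$, shows $\sum_c|S_{m_i,n_j}(c,\rho)_{ji}|\,c^{-2\re s}<\infty$ precisely when $2\re s>2$, giving absolute convergence of $\Kl_{m_i,n_j}(s,\rho)$ for $\re s>1$.

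The only substantive input is the geometric estimate $N^*(X)=O(X^2)$ — that finite covolume makes the double cosets in $\GGG$ of bounded modulus only polynomially numerous — which I would take from~\cite{Iwaniec:2002vu} rather than reprove; everything downstream is the triangle inequality and Abel summation. The one point deserving a remark is that the moduli $c$ need not be integers, so the ``Abel summation'' above is really summation against the counting measure of the modulus set, which changes nothing. (If desired, the sharper per-modulus bound $N(c)=O(c)$ alluded to in Section~\ref{kloostermansection} can be inserted at the start, but $N(c)\le N^*(c)=O(c^2)$ already suffices for all three claims.)
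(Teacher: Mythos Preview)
Your argument is correct and is exactly the standard route the paper has in mind: it does not give a proof at all beyond the sentence ``trivial bounds on the sums and zeta functions that arise from the finite volume of $\Gamma$'' and the citation to Iwaniec, so spelling out that each summand has modulus at most $1$ (unitarity of $\rho$ plus a real phase) and then feeding in the covolume counting $N^*(X)=O(X^2)$ via Abel summation is precisely what is intended. The only cosmetic point is that the paper itself records the per-modulus bound $N(c)=O(c)$ in Section~\ref{kloostermansection}, so you could quote that directly for $S=O(c)$ (hence $O(c^2)$) rather than going through $N(c)\le N^*(c)$, but as you note this makes no difference to any of the three conclusions.
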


For certain parts of the convergence proof, we will need better bounds.  They are presented here, but because the proofs are complex and the details are not relevant to the behavior of the Rademacher sums, we delay the proofs until Appendices~\ref{proofoflemma} and~\ref{otherproof}.  In presenting these proofs, we use the symbol $Z_{m_i,n_j}(s,\rho)_{ij}$ for the meromorphic continuation in $s$ of $\Kl_{m_i,n_j}(s,\rho)_{ji}$.

\begin{thm}
\label{Kllimit}
Let $m_in_j\neq 0$,  $0<\epsilon\leq \frac{1}{2}$, $\frac{1}{2}<\re(s)\leq 1+\epsilon$, and $|\im(s)|<1$.  Then,
\begin{equation*}
|Z_{m_i,n_j}(s,\rho)_{ji}| = O\left(|m_in_j|^{1-\re(s)+\epsilon}\frac{|\im(s)|^{1-\re(s)+\epsilon}}{\re(s)-1/2}\right)
\end{equation*}
as $\im(s)\to\infty$. The matrix-valued constant depends only on $\Gamma$ and $\rho$.  Furthermore, over this range, $Z_{m_i,n_j}(s,\rho)_{ji}$ is holomorphic  except at finitely many simple poles that lie on the real line. 
\end{thm}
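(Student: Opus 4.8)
The plan is to realize the meromorphic continuation $Z_{m_i,n_j}(s,\rho)_{ji}$ through the spectral theory of the Laplacian on $\Gamma\backslash\bbH$ twisted by $\rho$, adapting the Goldfeld--Sarnak treatment of scalar sums of Kloosterman sums to the vector-valued setting. For $\re(s)$ large I would attach to the $m_i$-th Fourier mode at the cusp a $\rho$-twisted Poincar\'e series $P_{m_i}(z,s)$, a $\Complex^d$-valued average over $\Ginf\backslash\Gamma$ of $\rho(\gamma)^{-1}\hat i$ against an exponentially decaying incomplete Whittaker term in $\gamma z$, and then unfold the $\rho$-twisted $L^2$-pairing $\langle P_{m_i}(\cdot,s),P_{n_j}(\cdot,\bar s)\rangle$. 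This splits into a diagonal contribution from the $c=0$ coset --- elementary, and meromorphic with its only possible pole in the region at $s=1/2$ --- together with $\sum_{c>0}S_{m_i,n_j}(c,\rho)_{ji}$ times a Bessel--Kloosterman integral transform of the Whittaker data, whose evaluation recovers $\Kl_{m_i,n_j}(s,\rho)_{ji}$ up to explicit archimedean factors. Because $\rho(\Gamma)$ is finite and the cocycle $\omega_w$ takes finitely many values, one may equally well pass to a finite cover of $\Gamma\backslash\bbH$ on which the multiplier trivializes, reducing everything componentwise to the scalar case and keeping all implied constants controlled by $\Gamma$ and $\rho$.

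The analytic continuation and the pole structure then fall out of the spectral decomposition of $P_{m_i}(\cdot,s)$ in $L^2(\Gamma\backslash\bbH,\rho)$: its projection onto a Maass form $u_k$ with eigenvalue $\lambda_k=s_k(1-s_k)$ contributes the $m_i$-th coefficient $\rho_k(m_i)$ times a $\Gamma$-factor meromorphic with poles only at $s=s_k$ and $s=1-s_k$, while the continuous spectrum contributes an integral along $\re(s)=1/2$. Hence $Z_{m_i,n_j}(s,\rho)_{ji}$ continues to $\re(s)>1/2$, where the only singularities are at the finitely many exceptional spectral parameters $s_k\in(1/2,1]$ (together with $s=1$ when $\rho(\Gamma)$ has a nonzero fixed vector), each a simple pole with residue proportional to $\sum_{\lambda_\ell=\lambda_k}\rho_\ell(m_i)\overline{\rho_\ell(n_j)}$; since $\re(s_k)>1/2$ forces $\im(s_k)=0$, these lie on the real axis, which is the last assertion of the theorem.

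For the bound in the regime described I would estimate the spectral expansion term by term. The $k$-th Whittaker transform decays rapidly once $|t_k|$ leaves a bounded neighbourhood of $|\im(s)|$ (writing $s_k=1/2+it_k$ for the tempered part), so only the band $t_k\asymp|\im(s)|$ contributes appreciably; summing over it with Weyl's law $\#\{|t_k|\le T\}\ll T^2$, the spectral large sieve $\sum_{|t_k|\le T}|\rho_k(m_i)|^2\ll(T^2+|m_i|^{1+\epsilon})$ for the coefficients (with the analogue at $n_j$, and $|\zeta(1+2it)|^{-1}\ll\log(2+|t|)$ to tame the Eisenstein contribution), and Stirling for the archimedean factors, gives a bound polynomial in $|\im(s)|$ and $|m_in_j|$; a careful accounting of exponents yields the shape $|m_in_j|^{1-\re(s)+\epsilon}\,|\im(s)|^{1-\re(s)+\epsilon}$, the $\epsilon$ absorbing the sieve losses. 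The factor $(\re(s)-1/2)^{-1}$ is carried along explicitly rather than hidden in a constant: it is precisely the pole at the spectral edge $s=1/2$ of the diagonal $\Gamma$-factor main term (equivalently, the density of the continuous spectrum as $\re(s)\to 1/2^{+}$). The step I expect to be the main obstacle is establishing the needed Bessel--Kloosterman / Whittaker transform estimates uniformly in the spectral parameter, in $\re(s)$, and in $m_i,n_j$ simultaneously near the critical line, and bounding the continuous-spectrum integral to the same precision; these are the technical points I would relegate to Appendix~\ref{proofoflemma}, the vector-valued bookkeeping being essentially cosmetic given that $\rho$ has finite image.
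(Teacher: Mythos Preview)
Your proposal is correct in spirit but takes a genuinely different route from the paper. The paper follows Goldfeld--Sarnak and Pribitkin much more literally: rather than spectrally decomposing $P_{m_i}(\cdot,s)$ and estimating the Maass and Eisenstein contributions term by term with Weyl's law and the spectral large sieve, it pairs $P_{m_i}(\cdot,s)$ against a \emph{shifted} Poincar\'e series $P_{n_j}(\cdot,\bar s+2)$ (Lemma~\ref{lemmatwo}), so that only one of the two factors needs analytic continuation. The $L^2$-norm of $P_{m_i}(\cdot,s)$ in the critical strip is then bounded purely by the soft resolvent estimate $\|(\Delta_k+s(1-s))^{-1}\|\le 1/|\im s(1-s)|$ combined with the functional equation $P_{m_i}(\cdot,s)=-4\pi m_i(s-k/2)(\Delta_k+s(1-s))^{-1}P_{m_i}(\cdot,s+1)$ (Lemma~\ref{lemmaone}); Cauchy--Schwarz and Stirling on the archimedean Gamma factors finish the job. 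The holomorphy away from finitely many real simple poles comes directly from the self-adjointness of $\Delta_k$ via the resolvent, without ever naming the exceptional eigenvalues. This buys a considerably shorter argument that never touches the large sieve, Weyl's law, or bounds on $|\zeta(1+2it)|^{-1}$, and works uniformly in the vector-valued setting without the reduction to a finite cover you invoke. Your spectral-expansion approach is more transparent about the source of the poles and would likely yield sharper constants, but it imports substantially more machinery than the theorem requires.
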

\begin{proof}
A proof is provided in Appendix~\ref{proofoflemma}.
\end{proof}
The part of the $s$ strip that is relevant is a small vertical band containing 1, but no other poles.  We can find such a band on which $Z$ satisfies slightly stricter limiting behavior.   
\begin{cor}\label{cor0}
For some $0<a<\frac{1}{2}$, $Z_{m_i,n_j}(s,\rho) $ has no poles in the interval $(1-a,1)$ and has the following limiting behavior: if $1-a<\re(s)<1+a$ and $|Im(s)|>0$, then \begin{equation*}
|Z_{m_i,n_j}(s,\rho)_{ji}| = O\left(|m_in_j|\frac{|\im(s)|^{1/2}}{\re(s)-1/2}\right)\end{equation*}
as $\im(s)\to\infty$.  The matrix-valued constant depends only on $\Gamma$ and $\rho$.
\end{cor}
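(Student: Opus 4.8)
The plan is to read Corollary~\ref{cor0} off from Theorem~\ref{Kllimit} after fixing a small value of $\epsilon$ and then shrinking the band half-width $a$.

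First I would locate the poles. Theorem~\ref{trivial} gives absolute convergence of the Dirichlet series, hence holomorphy of $Z_{m_i,n_j}(s,\rho)_{ji}$, on $\re(s)>1$; combined with Theorem~\ref{Kllimit} this confines all poles in the strip $\frac12<\re(s)\le 1+\epsilon$ to the real segment $(\frac12,1]$, where they are finite in number and occur only at locations drawn from a fixed finite set depending on $\Gamma$ and $\rho$ alone (the exceptional spectral parameters of the relevant Laplacian together with the possible pole at $s=1$), and in particular independent of $m_i$ and $n_j$. Letting $s_\ast<1$ denote the largest such location below $1$ (or $s_\ast=\frac12$ if there is none), any $a<1-s_\ast$ makes $(1-a,1)\subset(s_\ast,1)$ pole-free, which is the first assertion; the only pole that can then survive in the strip $1-a<\re(s)<1+a$ is the one at $s=1$ on the real axis, which is immaterial since the estimate is claimed only for $\im(s)\ne 0$.

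Next I would set $\epsilon=\frac14$ and choose $a<\min(\frac14,\,1-s_\ast)$, and then simply invoke Theorem~\ref{Kllimit}: on $1-a<\re(s)<1+a$ one has $\frac12<\re(s)\le 1+\epsilon$ and $0<1-\re(s)+\epsilon<a+\epsilon\le\frac12$, so the theorem yields $|Z_{m_i,n_j}(s,\rho)_{ji}| = O\bigl(|m_in_j|^{1-\re(s)+\epsilon}\,|\im(s)|^{1-\re(s)+\epsilon}/(\re(s)-\frac12)\bigr)$. The exponent $1-\re(s)+\epsilon$ lies in $(0,\frac12]$, so for $|\im(s)|\ge 1$ the factor $|\im(s)|^{1-\re(s)+\epsilon}$ is at most $|\im(s)|^{1/2}$; and since the finiteness of the image of $\rho$ forces the admissible exponents (those $m\ne 0$ with $hm\in\Z+\mu_i$, and similarly for $n$) to be bounded away from $0$ by a constant depending only on $\Gamma$ and $\rho$, $|m_in_j|$ is bounded below and hence $|m_in_j|^{1-\re(s)+\epsilon}\ll|m_in_j|$. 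Folding both comparisons into the matrix-valued implied constant --- which remains dependent only on $\Gamma$ and $\rho$ --- gives the stated bound as $\im(s)\to\infty$.

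I do not expect a genuine obstacle; the one point to watch is that a single $a$ serves uniformly, both in $m_i$ and $n_j$ for the pole-avoidance (fine, because the pole set is a fixed finite subset of $(\frac12,1]$) and in the exponent bookkeeping, which requires $a+\epsilon\le\frac12$ and $a\le\epsilon$ and is precisely why $\epsilon$ is taken to be $\frac14$.
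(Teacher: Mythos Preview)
Your proposal is correct and is exactly the derivation the paper has in mind: Corollary~\ref{cor0} is stated without proof, as an immediate specialization of Theorem~\ref{Kllimit} with a fixed $\epsilon$ and a suitably small band $a$, and your write-up carries that out cleanly. The one nontrivial point you flag---that the finitely many real poles are drawn from a fixed set depending only on $\Gamma$ and $\rho$ (hence a single $a$ works uniformly in $m_i,n_j$)---is indeed implicit in the paper's proof of Theorem~\ref{Kllimit}, where the poles arise from the resolvent $(\Delta_k+s(1-s))^{-1}$ and not from the parameters $m_i,n_j$.
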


In the case that $m$ or $n$ is zero, the limiting bound needs to be treated separately.  The following bound holds.

\begin{thm}\label{cor00}
For $m_in_j=0$ and one of $m_i,n_j$ nonzero, the following bound holds uniformly for some $a$ over the range $1-a<\re(s)<1+a$,
\begin{equation*}
|Z_{m_i,n_j}(s,\rho)_{ji}| = O\left({|\im(s)|}^{1/2}\right).
\end{equation*}

The matrix-valued constant may depend only on $\Gamma$, $\rho$, $m_i$, and $n_j$. 
\end{thm}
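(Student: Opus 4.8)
The plan is to identify $Z_{m_i,n_j}(s,\rho)_{ji}$, when one index vanishes, with a single Fourier coefficient of a vector‑valued real‑analytic Eisenstein series divided by an explicit Bessel--Gamma factor, and then to read the bound off the classical analytic properties of that Eisenstein series. I would begin with two reductions. First, the substitution $\gamma\mapsto\gamma^{-1}$ together with complex conjugation exchanges the two Kloosterman indices, swaps the matrix indices $i\leftrightarrow j$, and passes to a conjugate (and $\omega_w$‑twisted) multiplier, so it suffices to treat the case where it is the index attached to $\gamma\mapsto\gamma\infty$ — write it as $Z_{0,n}(s,\rho)_{ji}$ with $n\neq0$ — that is zero. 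Second, the dependence of $S_{m_i,n_j}(c,\rho)_{ji}$ on the weight $w$ is only through the unit‑modulus phase $\omega_w(\alpha^{-1},\gamma)$, so $\gamma\mapsto\omega_w(\alpha^{-1},\gamma)\rho(\alpha^{-1}\gamma)^{-1}\rho_\alpha^{-1}$ is the evaluation of a genuine unitary multiplier of weight $0$ and the entire analysis takes place in weight $0$. Finally, well‑definedness of the double‑coset sum in this case forces $\mu_i=0$, so the seed vector $\hat i$ below is legitimate.

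Fix $n$ with $hn\in\Z+\mu_j$, $n\neq0$, assume $\mu_i=0$, and introduce the weight‑$0$ Eisenstein series
\begin{equation*}
\vec E_{\hat i}(\tau,s)=\sum_{\gamma\in\Gamma_\infty\backslash\Gamma}\rho(\gamma)^{-1}\,\hat i\;\im(\gamma\tau)^{s},
\end{equation*}
absolutely convergent for $\re(s)>1$. Unfolding at $\infty$, the $\gamma$ with $c(\gamma)=0$ contribute $\hat i\,y^{s}$, and Poisson summation in the right $\Gamma_\infty$‑translates (width $h$) turns the $c(\gamma)\neq0$ part into $\sum_{m}e(mx)\,G_m(y,s)\sum_{c>0}S_{0,m}(c,\rho)\,c^{-2s}$, where $G_m(y,s)=c_0\,|m|^{s-1/2}\,y^{1/2}K_{s-1/2}(2\pi|m|y)/\Gamma(s)$ up to an absolute constant. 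Reading off the $(j,i)$‑entry of the $n$‑th coefficient gives, for $\re(s)>1$ and hence by meromorphic continuation throughout, the identity $e_{ji}(y,s)=G_n(y,s)\,Z_{0,n}(s,\rho)_{ji}$, where $e_{ji}(y,s)$ is the $n$‑th Fourier coefficient at $\infty$ of the $j$‑th component of $\vec E_{\hat i}$; the general cusp $\cusp$ is identical after unfolding with the scaling matrix $\alpha$ and the diagonalizer $\rho_\alpha$.

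It remains to assemble the estimate. By Stirling's formula for $1/\Gamma(s)$ and the large‑order asymptotics $K_\nu(x)\sim\tfrac12\Gamma(\nu)(x/2)^{-\nu}$ (valid for $\re\nu>0$, $x$ fixed), one gets $|G_n(y,s)|\sim c_1\,y^{1-\re(s)}\,|\im(s)|^{-1/2}$ as $|\im(s)|\to\infty$, uniformly for $1-a\le\re(s)\le1+a$ with $a\in(0,\tfrac12)$ and $y>0$ fixed; in particular $G_n(y,s)\neq0$ for $|\im(s)|$ large. On the other side, the continued Eisenstein series is holomorphic on $\re(s)>\tfrac12$ apart from finitely many simple poles, all real and in $(\tfrac12,1]$, and its Fourier coefficients are of finite order on vertical strips; since the residue of $\vec E_{\hat i}$ at $s=1$ is a constant function, $e_{ji}(y,s)$ is holomorphic at $s=1$ because $n\neq0$, so we fix $a$ so small that $1-a<\re(s)<1+a$ contains no pole of $e_{ji}$. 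Subtracting the finitely many principal parts from $e_{ji}(y,s)$ produces a function holomorphic on $\tfrac12\le\re(s)\le\tfrac32$, of finite order, bounded for fixed $y$ on $\re(s)=\tfrac32$ by absolute convergence and on $\re(s)=\tfrac12$ by the Maass--Selberg relations and unitarity of the scattering matrix (the nonconstant modes decaying exponentially in $|\im(s)|$, the constant term bounded in norm by the seed); Phragm\'en--Lindel\"of then bounds it throughout that strip, so $e_{ji}(y,s)=O_y(1)$ on $1-a\le\re(s)\le1+a$. Dividing, $|Z_{0,n}(s,\rho)_{ji}|=|e_{ji}(y,s)|/|G_n(y,s)|=O(|\im(s)|^{1/2})$ uniformly there, with implied constant depending on $\Gamma$, $\rho$, and $n$; undoing the reductions gives the bound for $Z_{m_i,n_j}$ in both cases $m_i=0$ and $n_j=0$.

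The main obstacle is the unfolding step in full generality: one must verify that the $\omega_w$‑cocycle phases together with the constraint $\mu_i=0$ make the inner sum exactly $h$‑periodic, so that Poisson summation reproduces $\sum_{c>0}S_{0,n}(c,\rho)\,c^{-2s}$ with the correct weight $G_n$ and no error, and set this up at a general cusp with its scaling matrix and data $\vec\nu$. On the analytic side nothing is genuinely new — meromorphic continuation, pole location, finite order, and critical‑line boundedness of vector‑valued Eisenstein series are classical (Selberg's resolvent method, the Maass--Selberg relations, unitarity of the scattering matrix; cf.~\cite{Iwaniec:2002vu,Knopp:2004uq}) — but one must cite or reprove these for unitary multiplier systems; the exponent $\tfrac12$ itself is produced entirely by the $|\im(s)|^{-1/2}$ size of the Bessel--Gamma factor $G_n$, not by any refined cancellation.
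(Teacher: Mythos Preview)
Your argument is correct and follows the same overall architecture as the paper's: reduce by symmetry to $m_i=0$, introduce the weight-$0$ vector-valued Eisenstein series $\vec E_{\hat i}(\tau,s)$, unfold to identify its nonzero Fourier coefficients with $Z_{0,n}(s,\rho)_{ji}$ times an explicit Bessel--Gamma factor, and extract the $|\im(s)|^{1/2}$ from the asymptotics $|\Gamma(s)/K_{s-1/2}(z)|=O(|s|^{1/2})$. The point of departure is how you bound the Fourier coefficients $e_{ji}(y,s)$ themselves. You invoke the spectral theory of the vector-valued Eisenstein series directly --- meromorphic continuation, Maass--Selberg, unitarity of the scattering matrix, and Phragm\'en--Lindel\"of --- which is perfectly sound but asks the reader to accept (or supply) that package in the vector-valued setting. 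The paper instead exploits the standing hypothesis that $\rho$ has \emph{finite} image: letting $\Gamma_i=\rho^{-1}\{\text{diagonal matrices with }1\text{ in slot }i\}$, a finite-index subgroup containing $\Gamma_\infty$, it writes $\vec E_{\hat i}(z,s)=\sum_r\rho(\beta_r)^{-1}\hat i\cdot E_{\Gamma_i}(\beta_r z,s)$ as a finite sum of \emph{scalar} Eisenstein series at various cusps of $\Gamma_i$, and then cites the scalar bounds (e.g.\ via \cite{Bruggeman:1994uha,Duncan:2011uj}) as a black box. Your route is more general --- it would survive dropping the finite-image assumption --- while the paper's route is more elementary and self-contained, since it never needs to develop or cite vector-valued spectral theory. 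Either way the exponent $\tfrac12$ comes entirely from the Bessel--Gamma factor, as you correctly emphasize.
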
  
\begin{proof}
It suffices to show that this is true for $m_i=0$, where it is proven in Appendix~\ref{otherproof}.
\end{proof}

\begin{cor}\label{cor}
For fixed $m$, the sum $\Kl_{m_i,n_j}(1)_{ji}= O(n)$ converges uniformly in $n$.
\end{cor}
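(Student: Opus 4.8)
The quantity $\Kl_{m_i,n_j}(1)_{ji}$ is by definition the limit of the partial sums $\sum_{0<c<K}S_{m_i,n_j}(c,\rho)_{ji}\,c^{-2}$, i.e.\ a Dirichlet series evaluated on the boundary $\re s=1$ of its half-plane of absolute convergence, so convergence has to be extracted from the analytic continuation. The plan is a standard Perron-type contour argument. Set $F(w)=\sum_c S_{m_i,n_j}(c,\rho)_{ji}\,c^{-w-2}=Z_{m_i,n_j}(\tfrac{w}{2}+1,\rho)_{ji}$, which converges absolutely for $\re w>0$. A truncated Perron formula at a small fixed abscissa $\sigma_0>0$ and height $T$ gives
\[
\sum_{0<c<K}\frac{S_{m_i,n_j}(c,\rho)_{ji}}{c^2}=\frac{1}{2\pi i}\int_{\sigma_0-iT}^{\sigma_0+iT}F(w)\,\frac{K^w}{w}\,dw+E_T ,
\]
and I would then move the contour to a line $\re w=-\eta$ with $0<\eta<2a$ ($a$ as in Corollary~\ref{cor0}), on which $w\mapsto Z_{m_i,n_j}(\tfrac{w}{2}+1,\rho)_{ji}$ is holomorphic by Corollary~\ref{cor0} when $m_in_j\neq0$ and by Theorem~\ref{cor00} when $m_i=0$; in both cases there is no pole at $s=1$, so the only pole crossed is the simple pole of $1/w$ at $w=0$, whose residue is exactly $Z_{m_i,n_j}(1,\rho)_{ji}$.

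The remaining estimates are routine given these inputs. On the shifted line and on the horizontal connectors $\re(\tfrac{w}{2}+1)$ stays in a fixed band around $1$ that is bounded away from $\tfrac12$, so Corollary~\ref{cor0} gives $|Z_{m_i,n_j}(\tfrac{w}{2}+1,\rho)_{ji}|\ll|m_in_j|\,(1+|\im w|)^{1/2}$, which for fixed $m$ is $\ll n\,(1+|\im w|)^{1/2}$. The truncation error $E_T$ is controlled using the trivial bound $S_{m_i,n_j}(c,\rho)_{ji}=O(c)$ from Section~\ref{kloostermansection} (the number of double cosets of a given level $c$ is $O(c)$ by finite volume, and each summand has modulus at most $1$), which makes the coefficients $S(c)/c^2$ of size $O(1/c)$ and thereby keeps the terms with $c$ far from $K$ under control; the terms with $c$ within a bounded factor of $K$, together with the two horizontal segments, are handled by the polynomial growth of $Z$ and the factor $1/w$ in the usual way. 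Balancing $T$ against $K$ (e.g.\ $T\asymp K^{\sigma_0+\eta}$ with $\sigma_0$ small compared with $\eta$) yields
\[
\sum_{0<c<K}\frac{S_{m_i,n_j}(c,\rho)_{ji}}{c^2}=Z_{m_i,n_j}(1,\rho)_{ji}+O\!\left(n\,K^{-\delta}\right)
\]
for some $\delta>0$, with an implied constant independent of $n$ and $K$.

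Letting $K\to\infty$ shows the defining limit exists and equals $Z_{m_i,n_j}(1,\rho)_{ji}$, so $\Kl_{m_i,n_j}(1)_{ji}$ converges; taking instead a fixed small $K$ in the displayed identity bounds $|Z_{m_i,n_j}(1,\rho)_{ji}|$ by $|S_{m_i,n_j}(1,\rho)_{ji}|+O(n)=O(n)$; and the error $O(n\,K^{-\delta})$ with its $n$-independent constant gives the asserted uniformity in $n$, the convergence being uniform once normalised by the $O(n)$ bound. The step I expect to be most delicate is the usual one for Perron's formula at the edge of absolute convergence — the terms with $c$ near $K$ and the horizontal contributions — which is handled here by combining the crude bound $S(c)=O(c)$ with the polynomial-in-$|\im s|$ control of $Z$ from Theorem~\ref{cor00} and Corollary~\ref{cor0}. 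A secondary point is checking that $Z_{m_i,n_j}(s,\rho)_{ji}$ has no pole at $s=1$: when $m_in_j\neq0$ this holds because the only Laplace eigenvalue that could produce such a pole is $\lambda=0$, whose nonconstant Fourier coefficients vanish, and when $m_i=0$ it is part of Theorem~\ref{cor00}.
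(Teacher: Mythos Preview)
Your Perron/contour-shift argument is correct and is precisely the route the paper has in mind: the paper's own proof is a one-line appeal to Duncan--Frenkel, Sections~3.1--3.3, where exactly this argument is carried out in the scalar case, with Corollary~\ref{cor0} (for $m_in_j\neq0$) and Theorem~\ref{cor00} (for $m_i=0$) supplying the vector-valued analytic input. Your handling of the pole question at $s=1$ and of the uniformity in $n$---the error $O(nK^{-\delta})$ with $n$-independent constant is just what is needed to interchange the $K$-limit with the Fourier sum in the application to $R_{0,j}$---spells out details the paper leaves to the citation.
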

\begin{proof}
With Corollary~\ref{cor0} and Theorem~\ref{cor00}, the proof is component-wise the same as in the scalar case.  See~\cite{Duncan:2011uj}, Sections 3.1 through 3.3 for details.
\end{proof}

%
%

\section{Convergence of the Rademacher Sum}
\label{convergence}

To prove convergence of the vector-valued Rademacher sums, we follow the approach of Duncan and Frenkel~\cite{Duncan:2011uj}.  Some additional changes are needed to account for nontrivial multiplier systems and the vector-valued nature of the sum.

\begin{thm}\label{convthm}
Fix a finite volume Fuchsian group $\Gamma\in\GL$ with cusp at $\infty$ and normal representation $\rho$ of weight $w$.  For $\re(w)\leq0$, and $0>nh\in\Z+\mu_i$,
\begin{equation*}
\Rad =\vec\Delta+ \lim_{K\to\infty}\sum_{\gamma\in\GG_{K,K^2}}j_w(\gamma,\tau)^{-1}\rho(\gamma)^{-1}\rad^{n_i}_w(\gamma,\tau)q_\gamma^{n_i}
\end{equation*}
converges locally uniformly to a holomorphic function on $\mathbb{H}$.  The Fourier coefficients of this limit are as described in Theorem~\ref{fouriercoefficients}.
\end{thm}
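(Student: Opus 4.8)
The plan is to follow the Duncan--Frenkel strategy adapted to the vector-valued, nontrivial-multiplier setting. First I would dispose of the elliptic contributions and the identity coset, and reduce to a sum over cosets $\Ginf\gamma$ with $c(\gamma)>0$, grouping the summands by the value $c=c(\gamma)$; within each fixed $c$ there are only $O(c)$ double cosets in $\GGG$ by the finite-volume bound cited after Definition~\ref{Sdef}. For a single summand with $\gamma=\sm abcd$ I would expand $\rad^{n_i}_w(\gamma,\tau)$ using its incomplete-gamma form~\eqref{eq545} together with the identity $\gamma\tau-\gamma\infty = -1/(c(c\tau+d))$, so that $j_w(\gamma,\tau)^{-1}\rad^{n_i}_w(\gamma,\tau)(\gamma q)^{n_i}$ becomes an explicit analytic function of $c\tau+d$; the key point is that the regularized summand decays like a negative power of $|c\tau+d|$ near the relevant boundary, enough to make the $d$-sum (equivalently the sum over the coset representative within a fixed $c$) absolutely convergent for $\re(w)\le 0$ after the $\vec\Delta$ subtraction has absorbed the one borderline term coming from $\mu_j=0$.

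Second, I would perform the sum over $d$ (i.e.\ over $\Ginf\gamma\Ginf$ with $c$ fixed, unfolded against the right $\Ginf$) by Lipschitz summation: $\sum_{n\in\Z}(\tau+n)^{-s} = \frac{(-2\pi i)^s}{\Gamma(s)}\sum_{\ell\ge 1}\ell^{s-1}q^\ell$ and its relatives. Carrying this out term by term in the $m$-expansion of~\eqref{eq545} and collecting powers of the resummation variable produces, after recognizing the resulting power series in $\sqrt{-k_j n_i}$ as a Bessel $J_{1-w}$ via the series $J_\alpha(z)=\sum_m \frac{(-1)^m}{m!\,\Gamma(m+\alpha+1)}(z/2)^{2m+\alpha}$, exactly the Fourier expansion asserted in Theorem~\ref{fouriercoefficients}, with the $c$-sum over Kloosterman sums appearing as $\sum_{c>0}S^\infty_{n_i,k_j}(c,\rho)_{ji}c^{-1}(\cdots)$. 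The phase cancellations between $\rho(\gamma)^{-1}$ and $(\gamma q)^{n_i}$ under a shift of the coset representative are what make these Kloosterman sums well-defined, as already noted in the text.

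Third — and this is where the real work lies — I would justify the interchange of the (conditionally convergent) $K\to\infty$ limit with the $d$-summation and the $m$-summation, and prove local uniform convergence on $\bbH$. This is the step that requires the analytic input of Appendix~\ref{kloosterman}: bounding the tail $\sum_{c\ge C}$ of the $c$-sum uniformly on compacta. For $k_j\neq 0$ one writes the $c$-sum as a contour integral of $Z_{n_i,k_j}(s,\rho)_{ji}$ times a ratio of Gamma functions (a Mellin--Barnes representation of the Bessel factor), shifts the contour past $\re(s)=1$ into the band $1-a<\re(s)<1+a$ furnished by Corollary~\ref{cor0}, picks up the pole at $s=1$ (this is precisely the term matching $2\Delta$ and the reordering correction), and estimates the shifted integral using the bound $|Z_{n_i,k_j}(s,\rho)_{ji}| = O\!\big(|n_i k_j|\,|\im s|^{1/2}/(\re s - 1/2)\big)$; the Gamma-function decay in $\im s$ beats this polynomial growth and gives convergence. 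The $k_j=0$ terms are handled analogously using Theorem~\ref{cor00} instead. The summability over $k_j$ then follows from the resulting exponential-type control — the same estimate that produces the asymptotics~\eqref{asymptotics} — and yields the claimed holomorphy and local uniform convergence. I expect the contour-shift/tail-estimate bookkeeping, and in particular matching the pole residue at $s=1$ against $\vec\Delta$ across all components (with the $\mu_j=0$ versus $\mu_j\neq 0$ dichotomy), to be the main obstacle; everything else is a vector-valued transcription of the scalar arguments in~\cite{Niebur:1974vw} and~\cite{Duncan:2011uj}.
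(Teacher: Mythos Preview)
Your first two steps --- expanding $\rad^{n_i}_w$ via its series~\eqref{eq545}, passing to the double-coset parametrization, and applying Lipschitz summation term by term in the expansion index $m$ --- match the paper exactly and produce the Kloosterman sums and, after resumming in $m$, the Bessel-$J_{1-w}$ Fourier coefficients as claimed.

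The convergence step is where you diverge. The paper does not use a Mellin--Barnes representation or a contour shift at all. Instead it splits the post-Lipschitz expression according to the index $m$ from~\eqref{eq545}: the terms with $m\ge 1$ assemble into a piece $R_{+,j}$ in which the $c$-sum is $\Kl_{n_i,k_j}(m+1-w/2)$, evaluated at $\re(s)\ge 2-\re(w)/2>1$, so the \emph{trivial} estimate of Theorem~\ref{trivial} already gives absolute, locally uniform convergence. The single term $m=0$ gives $R_{0,j}$, where the $c$-sum is $\Kl_{n_i,k_j}(1-w/2)$; for $\re(w)<0$ this again lies in the region of absolute convergence, and only in the borderline case $w=0$ is any deeper input needed, invoked once via Corollary~\ref{cor} (convergence of $\Kl_{n_i,k_j}(1)$ with a uniform $O(k_j)$ bound). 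Your contour-shift route could be made to work, but it deploys the Goldfeld--Sarnak-type bounds of Corollary~\ref{cor0} across the whole sum, whereas the paper confines them to the one case where the trivial bound fails. For $w<0$ your approach buys nothing; the paper's argument is strictly more elementary there.

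One identification in your sketch is off: $\vec\Delta$ does \emph{not} arise as a residue at $s=1$ of a shifted contour in the $k_j\neq 0$ terms. In the paper it comes from the boundary case of the Lipschitz formula (the $-\pi i$ term when $\alpha=\mu_j=0$, $p=1$), which produces an $S_{n_i,0}(c)$-weighted sum that is by definition $\Delta_j$; combined with the $\Delta_j$ already present in the defining expression this accounts for the $2\Delta_j$ in the final Fourier expansion. So the ``reordering correction'' lives in the $k_j=0$ Lipschitz boundary term, not in a pole of $Z_{n_i,k_j}$ for positive $k_j$.
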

\begin{proof}

Consider the expression 

\begin{equation*}
\Rad = \vec\Delta+\lim_{K\to\infty}\sum_{\gamma\in\GG_{K,K^2}}j_w(\gamma,\tau)^{-1}\rho(\gamma)^{-1}\rad^{n_i}_w(\gamma,\tau)q_\gamma^{n_i}.
\end{equation*}
We can substitute in the sum expression in equation~\ref{eq545} for the regularization factor and set $\gamma\tau-\gamma\infty=-1/(c(c\tau+d))$.  We change the sum over $\GGstar$ to a double coset sum by summing over $\gamma T^{hl}$ for $\gamma\in\GGstarG$ and $l\in\Z$.  We express the sum in terms of bounds on $c$ and $d$ with the implicit assumption that the sum is restricted to $c,d$ such that $\sm**cd\in\Gamma$.. After including the $K$ limits the $j$th component of the Rademacher sum is
\begin{multline*}
\Radnovec= \delta_{ij}q^{n_i}+\Delta_j+\lim_{K\to\infty}\sum_{0<c<K}\sum_{0<d<ch}\sum_{\substack{|l|<K^2/ch\\l\in\Z}}
e(-l\mu_j)\rho(\gamma)^{-1}_{ji}e\left(n_i\gamma T^{hl}\tau\right)\\
{}\cdot\e\left(n_i(-\gamma T^{hl}\tau+\gamma T^l\infty)\right)\sum^{\infty}_{m=0}\frac{(-2\pi in_ih/c)^{m+1-w} (c\tau+d+lch)^{-m-1}}{\Gamma(m+2-w)}
\end{multline*}
plus a correction term that is $O(1/K^2)$, arising from the reordering.  We have used the fact that for $T^h\tau = \tau+h$, $\rho(T) = [e(\vec\mu)]$ and $n_ih\in\Z+\mu_i$.  The second two sums converge absolutely, so we can change the order of summation in the inner sum.
\begin{multline*}
\Radnovec= \delta_{ij}q^{n_i}+\Delta_j+\lim_{K\to\infty}\sum_{0<c<K}\sum_{0<d<ch}\rho(\gamma)^{-1}_{ji}e\left(n_i\gamma\infty\right)\\
\cdot\sum_{m=0}^\infty\frac{(-2\pi in_i/c)^{m+1-w}}{\Gamma(m+2-w)}
\sum_{\substack{|l|<K^2/ch\\l\in\Z}}e(-l\mu_j)(c\tau+d+lch)^{-m-1}.
\end{multline*}

This sum can be simplified using the Lipschitz summation formula\cite{Knopp:2001ks}.
\begin{prop}(Lipshitz summation formula) For $\im(w)>0, p\geq1,$ and $0\leq\alpha<1$,
\begin{multline*}
\sum_{\substack{|n|<N\\n\in\Z}}
\frac{e(-n\alpha)}{(w+n)^{p}}
-\frac{(-2\pi i)^p}{\Gamma(p)}
\sum_{\substack{m+\alpha>0\\m\in\Z}}(m+\alpha)^{p-1}e((m+\alpha)w)
= \begin{cases} -\pi i+O(1/N) &\mbox{if } \alpha =0, p=1\\ 
O(1/N^2) & \mbox{otherwise} \end{cases}
\end{multline*}
as $N$ approaches $\infty$.  This convergence is locally uniform in $\tau$.
\end{prop}

We continue neglecting $O(1/K^2)$ terms so that the result of the Lipschitz summation is
\begin{multline*}
\Radnovec= \delta_{ij}q^{n_i}+\Delta_j+\lim_{K\to\infty}
\sum_{0<c<K}
\sum_{0<d<ch}\rho(\gamma)^{-1}_{ji}e\left(n_i\gamma\infty\right)
\sum_{m=0}^\infty\frac{(-2\pi in_i/c)^{m+1-w}}{\Gamma(m+2-w)}\\
\cdot\left(-\frac{\pi i}{ch}\delta_{m0}\delta_{\mu_j0}+
\sum_{\substack{k'\in\Z\\k'+\mu_j>0}} \frac{1}{\Gamma(m+1)}\left(\frac{-2\pi i}{ch}\right)^{m+1} (k'+\mu_j)^m e\left(\frac{d(k'+\mu_j)}{ch}\right)q^{(k'+\mu_j)/h}\right)
\end{multline*}

The sum over $m$ is absolutely convergent, so we can rearrange the sum.  When we do so, the finite sum over $d$ becomes a Kloosterman sum.  Writing $h k_j = k+\mu_j$, 
\begin{multline}\label{breakpoint}
\Radnovec= \delta_{ij}q^{n_i}+\Delta_j+\lim_{K\to\infty}\sum_{0<c<K}
\sum_{m=0}^\infty\frac{(-2\pi in_i/c)^{m+1-w}}{\Gamma(m+2-w)}\\
\cdot\left(-\frac{\pi i}{ch}\delta_{m0}\delta_{\mu_j0}S_{n_i,0}(c,\rho)_{ji}+
\sum_{\substack{hk_j\in\Z+\mu_j\\k_j>0}} \frac{1}{\Gamma(m+1)}\frac{1}{h}\left(\frac{-2\pi i}{c}\right)^{m+1} k_j^mS_{n_i,k_j}(c,\rho)_{ji}q^{k_j}
\right)
\end{multline}

To make the sum more tractable, we split up the individual components.  Write
\begin{equation*}
\Radnovec= \delta_{ij}q^{n_i} + 2\Delta_j +R_{+,j} + R_{0,j}.
\end{equation*}
One of the $\Delta_j$ terms arises from $-\pi i/(ch)\delta_{m0}\delta_{\mu_j0}S_{0,n_i}(c,\rho)_{ji}$ in equation~\ref{breakpoint}.  The $R_{+,j}$ consists of the sum across all of $c$ and across positive $m$.  The $R_{0,j}$ term consists of the sum across $c$ for $m=0$.  We will see that the rearrangement of the sum is valid because $R_{+,j}$ converges absolutely.

Consider the $R_{+,j}$ part of the sum.
\begin{multline*}
R_{+,j}=
\lim_{K\to\infty}\sum_{0<c<K}
\sum_{m=1}^\infty\frac{(-2\pi in_i/c)^{m+1-w}}{\Gamma(m+2-w)}
\sum_{\substack{hk_j\in\Z+\mu_j\\k_j>0}} \frac{1}{\Gamma(m+1)}\frac{1}{h}\left(\frac{-2\pi i}{c}\right)^{m+1} k_j^mS_{n_i,k_j}(c,\rho)_{ji}q^{k_j}
\end{multline*}

For $\re s>1$, the Kloosterman-Selberg zeta function converges absolutely and has limiting behavior given by Theorem~\ref{Kllimit}.  We can therefore rearrange the integral and use the definition of the Kloosterman-Selberg zeta function to see the following expression for $R_+$,
\begin{multline*}
R_{+,j}=
\sum_{m=1}^\infty
\frac{(-2\pi in_i)^{m+1-w}}{\Gamma(m+2-w)}
\sum_{\substack{hk_j\in\Z+\mu_j\\k_j>0}} 
\frac{1}{\Gamma(m+1)}\frac{1}{h}\left(\frac{-2\pi i}{c}\right)^{m+1} k_j^mq^{k_j}
Z_{n_i,k_j}(m+1-w/2,\rho)_{ji},
\end{multline*}
which is absolutely and locally uniformly convergent by the trivial bounds in Theorem~\ref{trivial}.

Now consider the $R_{0,j}$ term.
\begin{align*}
R_{0,j} &=
\lim_{K\to\infty}
\sum_{0<c<K}
\frac{(-2\pi in_i/c)^{1-w}}{\Gamma(2-w)}
\sum_{\substack{hk_j\in\Z+\mu_j\\k_j>0}} \frac{-2\pi i}{ch} S_{n_i,k_j}(c,\rho)_{ji}q^{k_j}
\\
&=\lim_{K\to\infty}
\frac{(-2\pi in_i)^{1-w}}{\Gamma(2-w)}\frac{-2\pi i}{h}
\sum_{k_j\in\Z+\mu_j} Kl_{n_i,k_j}(1-w/2,\rho)_{ji,<K} q^{k_j},
\end{align*}
which converges absolutely and locally uniformly for $w<0$ by the trivial bounds in Theorem~\ref{trivial}.  In the case that $w=0$, this converges locally uniformly by the bounds in Corollary~\ref{cor}.

For the Fourier coefficients, return to equation~\ref{breakpoint}. Using the convergence results above, the inner sum can be rearranged to allow us to use the infinite series expression for the Bessel $ J$ function, which gives
\begin{multline*}
\Radnovec= \delta_{ij}q^{n_i}+2\Delta_j+\sum_{\substack{hk_j\in\Z+\mu_j\\k_j>0}} q^{k_j}\sum_{c>0}S_{n_i,k_j}(c,\rho)_{ji}\\
\cdot \frac{-2\pi i}{ch}\left(-\frac{k_j}{n_i}\right)^{\frac{w-1}{2}}
J_{1-w}\left(\frac{4 \pi i}{c}  \sqrt{-k_jn_i}\right)
\end{multline*}

\end{proof}

%
%

\section{Limiting Behavior of the Kloosterman-Selberg Zeta Function}\label{proofoflemma}

Understanding the behavior of $\Kl_{m,n}(1)_{<K}$ requires a generalization of a result by Goldfeld-Sarnak, which we will elaborate on below \cite{Goldfeld:1983wq,Pribitkin:2000jl}.  Recall that $Z_{m_i,n_j}(s,\rho)$ is the analytic continuation of $Kl_{m,n}(s,\rho)$.  Then, we have the following bound

\begin{thm*}
Let $m_i n_j\neq 0$,  $0<\epsilon\leq \frac{1}{2}$, $\frac{1}{2}<\re(s)\leq 1+\epsilon$, and $|\im(s)|<1$.  Then,
\begin{equation*}
|Z_{m_i,n_j}(s,\rho)_{ji}| = O\left(|m_in_j|^{1-\re(s)+\epsilon}\frac{|\im(s)|^{1-\re(s)+\epsilon}}{\re(s)-1/2}\right)
\end{equation*}
as $\operatorname{Im}(s)\to\infty$. The matrix-valued constant may depend on $m, n,$ $\Gamma$ and $\rho$.  Furthermore, over this range, $Z_{m_i,n_j}(s,\rho)_{ji}$ is holomorphic  except at finitely many simple poles that lie on the real line. 
\end{thm*}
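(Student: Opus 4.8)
The plan is to adapt the spectral argument of Goldfeld--Sarnak, in the sharpened form due to Pribitkin, to the vector-valued setting; the guiding observation is that passing to a nontrivial unitary multiplier system changes only the arithmetic combinatorics --- already packaged inside the Kloosterman sums of Definition~\ref{Sdef} --- and leaves the analysis essentially componentwise. Concretely, I would realize $Z_{m_i,n_j}(s,\rho)_{ji}$ as a Fourier coefficient of a vector-valued Poincar\'e series attached to $\rho$, and then control its continuation and growth through the resolvent of the weight-$w$ hyperbolic Laplacian $\Delta_w$ twisted by $\rho$. For $\re(s)>1$, form
\begin{equation*}
U_{m_i}(z,s)=\sum_{\gamma\in\GG}j_w(\gamma,z)^{-1}\,\rho(\gamma)^{-1}\cdot\hat i\cdot\mathcal{W}_s(m_i;\gamma z),
\end{equation*}
where $\mathcal{W}_s(m_i;\cdot)$ is the weight-$w$ Whittaker function with frequency $m_i$ and spectral parameter $s$, an eigenfunction of $\Delta_w$ away from $i\infty$ with leading exponential $e(m_i\re z)$ at the cusp. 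This converges absolutely, and is square-integrable over $\Gamma\backslash\bbH$, for $\re(s)>1$, by the trivial double-coset count behind Theorem~\ref{trivial}. Reading off the $n_j$th Fourier component of $U_{m_i}(\cdot,s)$ at $\infty$ --- equivalently, unfolding its Petersson pairing against the Eisenstein-type series there --- produces $\sum_{c>0}c^{-2s}S_{m_i,n_j}(c,\rho)_{ji}$ times an explicit ratio of $\Gamma$-functions in $s$ and a Bessel transform in $m_i,n_j,s$; dividing out those elementary factors exhibits $Z_{m_i,n_j}(s,\rho)_{ji}$ as a slice of an object whose analytic behaviour coincides with that of $U_{m_i}$.

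The structural input is that $U_{m_i}(\cdot,s)$ is a \emph{near}-eigenfunction: with $\lambda(s)$ the Laplace eigenvalue corresponding to spectral parameter $s$, the function $(\Delta_w-\lambda(s))U_{m_i}(\cdot,s)$ is automorphic and decays rapidly at every cusp. Hence on $\re(s)>1$ one may write $U_{m_i}(\cdot,s)=R(s)\big[(\Delta_w-\lambda(s))U_{m_i}(\cdot,s)\big]+(\text{a rapidly decaying remainder})$, with $R(s)$ the resolvent of the self-adjoint weight-$w$, multiplier-$\rho$ Laplacian. Since $\rho$ is unitary with finite image, the spectral theory of $\Delta_w$ is the classical one --- finitely many eigenvalues below the continuous spectrum, the latter carried by vector-valued Eisenstein series at the finitely many cusps --- so $R(s)$ continues meromorphically from $\re(s)>1$ across $\re(s)=1$ into $\re(s)>\tfrac12$, with poles only at the finitely many $s=s_j\in(\tfrac12,1]$ coming from exceptional eigenvalues or from the constant eigenfunction at $s=1$. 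These poles are simple and real, and the residue at $s=1$ is proportional to a nonzero Fourier coefficient of a constant section and hence vanishes when $m_in_j\neq0$. This yields at once the meromorphic continuation of $Z_{m_i,n_j}(s,\rho)_{ji}$ asserted in the theorem, together with the finiteness, simplicity, and reality of its poles in the strip; shrinking the band around $\re(s)=1$ to avoid every $s_j$ is what underlies Corollary~\ref{cor0} and Theorem~\ref{cor00}.

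For the growth bound I would estimate, uniformly as $\im(s)\to\infty$ in $\tfrac12<\re(s)\le 1+\epsilon$, three ingredients: (i) the size of the twisted automorphic Green's function, which is $O\!\big(1/\operatorname{dist}(\lambda(s),\operatorname{spec}\Delta_w)\big)$, hence $O\!\big(1/(\re(s)-\tfrac12)\big)$ once the real part is separated, the continuous spectrum forcing the distance to the line $\re(s)=\tfrac12$ into the denominator; (ii) the norm of the rapidly decaying defect, polynomially bounded in $\im(s)$ and $|m_i|$; and (iii) the Bessel/Whittaker transform that turns the resolvent bound back into a bound on a single Fourier coefficient, which contributes the powers of $|m_in_j|$ and $|\im(s)|$. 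On $\re(s)=1+\epsilon$, absolute convergence (Theorem~\ref{trivial}) gives a bound with no dependence on $m_i,n_j$; near $\re(s)=\tfrac12$ one has only a cruder polynomial bound; interpolating between the two by Phragm\'en--Lindel\"of in the vertical strip produces exactly the exponents $1-\re(s)+\epsilon$ on both $|m_in_j|$ and $|\im(s)|$ in the statement. The main obstacle is step (iii) made uniform in $(m_i,n_j)$: one needs pointwise, not merely $L^2$, control of the twisted Green's function near the cusp, together with a careful accounting of how the Bessel transform of a resolvent estimate degrades as the spectral parameter and the frequencies both grow. This is the technical core of Pribitkin's scalar argument; the vector-valued version proceeds componentwise once the finitely many Eisenstein contributions are fixed and $\rho$ is checked to be inert in every analytic estimate, and the remainder of the proof makes these estimates precise.
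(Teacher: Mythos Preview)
Your proposal is correct and follows the same Goldfeld--Sarnak/Pribitkin spectral approach as the paper: build a vector-valued non-holomorphic Poincar\'e series attached to $\rho$, continue it via the resolvent of the twisted Laplacian, and extract $Z$ from a Petersson pairing. The paper's execution is slightly more concrete---it uses the simpler series $P_{m_i}(z,s,\rho,k,i)$ with kernel $\e(m_i\gamma z)\,y^s|cz+d|^{-2s}$ in place of a full Whittaker function, continues by the shift identity~\eqref{recursion} rather than a near-eigenfunction argument, computes the inner product against $P_{n_j}(\cdot,\bar s+2)$ explicitly in Lemma~\ref{lemmatwo}, and closes with Cauchy--Schwarz, Lemma~\ref{lemmaone}, and Stirling rather than an explicit Phragm\'en--Lindel\"of interpolation---but the analytic content is the same.
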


The theorem will follow trivially from the two lemmas below.

Following 
 Pribitkin \cite{Pribitkin:2000jl}, let $z=x+iy$ for $x,y\in\bbR$ and define the space, $L^2(\Gamma\backslash\mathcal H,\rho,k)$ to be the Hilbert space of functions $f:\mathcal H\to\Complex^{d\times d}$ satisfying
\begin{equation*}
f(\gamma z) = \rho(\gamma) e^{i k \arg(cz +d)} f(z)
\end{equation*}
and that are square-integrable:
\begin{equation*}
\intint_{\Gamma\backslash\mathcal H} |f_{ij}(z)|^2 \frac{dxdy}{y^2}<\infty
\end{equation*}
observe that that condition does not depend on the choice of fundamental domain because it is true if and only if
$\intint_{\Gamma\backslash\mathcal H} \sum_i|f_{ij}(z)|^2 \frac{dxdy}{y^2}<\infty$
for all $j$.  The later expression is independent of choice of the $\Gamma\backslash\mathcal H$ by the first condition and because the $\rho$ are unitary.

Consider the hyperbolic Laplacian of weight $k$,
\begin{equation*}
\Delta_k = y^2\left(\frac{\partial^2}{\partial x^2}+\frac{\partial^2}{\partial y^2}\right) -iky\frac{\partial}{\partial x}
\end{equation*}
which acts component-wise on functions $f$.  $\Delta_k$ can be extended to $L^2(\Gamma\backslash\mathcal H,\rho,k)$ as an self-adjoint operator.\cite{Pribitkin:2000jl}.   It is a standard result that there are only finitely many eigenvalues of $\Delta_k$ in the range $0<\re(k)<\frac{1}{2}$, which means that the resolvent operator ${(\Delta_k+s(1-s))^{-1}}$ is holomorphic over $\frac{1}{2}<\re(s)<1$ except at finitely many points that lie on the real line.\cite{Pribitkin:2000jl}

 In order to approximate the size of the Kloosterman sum, we consider the vector-valued generalization of Selberg's non-holomorphic Poincar\'e series~\cite{Selberg:1965vr}.  Let $\hat i^T$ be the transpose of the $i$th unit vector. Given $m_i>0$ with $hm_i\in\Z+\mu_i$,
\begin{equation*}
P_{m_i}(z,s,\rho,k, i) = \sum_{\gamma\in\GG}{\hat i}^T\e(m_i\gamma z)\bar\rho(\gamma) \frac{y^s}{|cz+d|^{2s}}e^{-i k \arg(cz +d)}
\end{equation*}
converges absolutely and uniformly for $\re(s)>1$ and
is in $L^2(\Gamma\backslash\mathcal H,\rho,k)$.  The vector-valued $P$ satisfies the following functional equation
\begin{equation}
\label{recursion}
P_{m_i}(z,s,\rho,k) = -4\pi\left(s-\frac{k}{2}\right)(\Delta_k+s(1-s))^{-1}P_{m_i}(z,s+1,\rho)m_i.
\end{equation}
Because of the functional equation and because $(\Delta_k+s(1-s))^{-1}$ is a holomorphic operator in the sense of~\cite{Selberg:1965vr}, $P_{m_i}(z,s,\rho,k)$ can be meromorphically continued  continued to $\frac{1}{2}<\re(s)<1$, where it is holomorphic except for finitely many simple poles that lie on the real line.



The spectral theory of $\Delta_k$ will allow us to give bounds on $P_{m_i}(z,s,\rho,i)$.  Together with Lemma~\ref{lemmatwo}, this will give us bounds on $Z_{m_i,n_j}(s,\rho)$.


\begin{lem}\label{lemmaone}
For $\frac{1}{2}<\re(s)\leq\frac{3}{2}$, and $|\im(s)|>1$, 
\begin{equation*}
\intint_{\Gamma\backslash\mathcal H}|P_{m_i}(z,s,\rho,k,i)|^2\frac{dxdy}{y^2} = O\left(\frac{m_i^2}{(\re(s)-1/2)^2}\right),
\end{equation*}
where the constant depends on $\Gamma,\rho$, and $k$.
\end{lem}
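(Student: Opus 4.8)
The plan is to push the problem from the critical strip into the half-plane of absolute convergence by a single application of the functional equation~\eqref{recursion}, paying for this with an operator-norm bound on the resolvent $(\Delta_k+s(1-s))^{-1}$. Concretely: (i) I would bound $\|(\Delta_k+s(1-s))^{-1}\|_{\mathrm{op}}$ on the region $\{\tfrac12<\re(s)\le\tfrac32,\ |\im(s)|>1\}$; (ii) I would bound $\|P_{m_i}(\cdot,s+1,\rho,k,i)\|_2$ directly on $\re(s+1)\ge\tfrac32$, where the defining series is absolutely convergent and lies in $L^2(\Gamma\backslash\mathcal H,\rho,k)$; and (iii) I would combine (i) and (ii) through~\eqref{recursion}.

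For (i), recall that $\Delta_k$ is self-adjoint on $L^2(\Gamma\backslash\mathcal H,\rho,k)$, so its spectrum lies in $\bbR$ and hence $\|(\Delta_k+s(1-s))^{-1}\|_{\mathrm{op}}\le 1/\operatorname{dist}(s(1-s),\bbR)=1/|\im(s(1-s))|$. Writing $s=\sigma+it$ one has $\im(s(1-s))=t(1-2\sigma)$, so for $\sigma\in(\tfrac12,\tfrac32]$ its modulus is $|t|\,(2\sigma-1)=2|t|(\sigma-\tfrac12)$, giving
\begin{equation*}
\bigl\|(\Delta_k+s(1-s))^{-1}\bigr\|_{\mathrm{op}}\le\frac{1}{2\,|\im(s)|\,(\re(s)-\tfrac12)}
\end{equation*}
throughout the stated region; in particular the resolvent, and hence $P_{m_i}(\cdot,s,\rho,k,i)$, is holomorphic there, since $s(1-s)$ stays off the real axis when $|\im(s)|>1$ and $\re(s)>\tfrac12$ (the finitely many singularities reached on further continuation lie on the real line).

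For (ii), on $\re(s+1)=\sigma+1\ge\tfrac32$ the Rankin--Selberg method applies: unfolding $\|P_{m_i}(\cdot,s+1,\rho,k,i)\|_2^2$ against the identity coset collapses $\Gamma\backslash\mathcal H$ to $\Gamma_\infty\backslash\mathcal H$ and isolates, from the Fourier expansion of $P_{m_i}(\cdot,s+1)$ at $\infty$, the diagonal term $h\,\Gamma(2\re(s)+1)/(4\pi m_i)^{2\re(s)+1}$ together with a sum over $c>0$ of Kloosterman sums $S_{m_i,m_i}(c,\rho)$ paired against Bessel-function integrals (the phase $e^{-ik\arg(cz+d)}$ and the use of $\bar\rho$ are exactly what place $P_{m_i}$ in $L^2(\Gamma\backslash\mathcal H,\rho,k)$ and legitimize the unfolding). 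The remainder is handled by the trivial bound $S_{m_i,m_i}(c,\rho)=O(c^2)$ of Theorem~\ref{trivial} together with the standard estimates for the Bessel integrals, exactly as in the scalar case of~\cite{Pribitkin:2000jl}. Since $hm_i\in\Z+\mu_i$ with $m_i>0$ forces $m_i$ to be at least a positive constant depending only on $\Gamma$ and $\rho$, and since $\re(s)$ ranges over the bounded interval $(\tfrac12,\tfrac32]$, all of this is $O(1)$ uniformly in $|\im(s)|>1$, with implied constant depending only on $\Gamma,\rho,k$; that is, $\|P_{m_i}(\cdot,s+1,\rho,k,i)\|_2^2=O(1)$.

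For (iii), feeding (i) and (ii) into~\eqref{recursion} and using $|s-\tfrac k2|=O(|\im(s)|)$ for $|\im(s)|>1$, I obtain
\begin{equation*}
\bigl\|P_{m_i}(\cdot,s,\rho,k,i)\bigr\|_2\le 4\pi\Bigl|s-\tfrac k2\Bigr|\,\bigl\|(\Delta_k+s(1-s))^{-1}\bigr\|_{\mathrm{op}}\,\bigl\|P_{m_i}(\cdot,s+1,\rho,k,i)\bigr\|_2\,m_i=O\!\left(\frac{m_i}{\re(s)-\tfrac12}\right),
\end{equation*}
and squaring yields the assertion. I expect step (ii) to be the main obstacle: carrying out the Rankin--Selberg unfolding correctly in the vector-valued, weight-$k$ setting and, above all, showing that the $c>0$ Bessel/Kloosterman contribution really is $O(1)$ uniformly as $\im(s)\to\infty$ and as $\re(s)$ approaches $\tfrac12$ --- this is where the decay of the Bessel integrals in $c$ must be used. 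By contrast, the resolvent estimate in (i) is soft once the self-adjointness of $\Delta_k$ and the discreteness of its small-eigenvalue spectrum (both recorded above) are in hand.
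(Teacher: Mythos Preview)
Your proposal is correct and follows essentially the same route as the paper: a single application of the functional equation~\eqref{recursion}, a resolvent bound coming from self-adjointness of $\Delta_k$ (yielding the factor $|\im(s)|^{-1}(\re(s)-\tfrac12)^{-1}$), and an $O(1)$ bound for $\|P_{m_i}(\cdot,s+1)\|_2$ on $\re(s+1)>\tfrac32$. The only cosmetic difference is that the paper outsources step~(ii) to Iwaniec~\cite{Iwaniec:2002vu} rather than sketching the Rankin--Selberg unfolding as you do.
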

\begin{proof}

There is a bound for $\frac{3}{2}<\re(s)<\frac{5}{2}$,
\[\intint_{\Gamma\backslash\mathcal H}|P_{m_i}(z,s,\rho,k,i)|^2\frac{dxdy}{y^2} = O(1)\]
as $m_i$ varies, where the constant depends on $\Gamma$ and $\rho$.
For $\Gamma\not\subset\SL$, this claim is nontrivial, but the vector valued case and the scalar case follow from the bounds in Section 2.6 of Iweniac\cite{Iwaniec:2002vu}.  

Now we can use this bound and the functional equation to find a bound for $\frac{1}{2}<s<\frac{3}{2}$.  We know from the spectral theory a bound on the magnitude of the resolvent:
\begin{equation*}
|\Delta_k^{-1}| \leq \frac{1}{\operatorname{distance}(k,\operatorname{spectrum}(\Delta_k))}.
\end{equation*}
The distance is known since $\Delta_k$ is self-adjoint, so its spectrum is real.  When applied to the recursion formula,~\ref{recursion}, this gives $|\Delta_k^{-1}| \leq1/(|\im(s(1-s))|)$ and for $\frac{1}{2}<\re(s)\leq\frac{3}{2}$,
\begin{equation*}
\left(\intint_{\Gamma\backslash\mathcal H}|P_{m_i}(z,s,\rho,k,i)|^2\frac{dxdy}{y^2}\right)^\frac{1}{2} = O\left(m_i\frac{|s-k/2|}{|\im(s)|\;|2\re(s)-1|}\right),
\end{equation*} 
whereupon the bound $|\im(s)|>1$ gives us the desired result.
\end{proof}


We can approximate the size of the Kloosterman-Selberg zeta function directly using the non-holomorphic Poincar\'e series and use Lemma~\ref{lemmaone} to obtain a bound on the zeta function.


\begin{lem}\label{lemmatwo}
For $m_i,n_j>0$ and $\re(s)>\frac{1}{2}$,
\begin{multline*}
\intint_{\Gamma\backslash\mathcal H}
\overline{P_{n_j}(z,\bar s+2,\rho,k, j)}
P_{m_i}(z,s,\rho,k,i)^T
\frac{dxdy}{y^2} = \delta_{m_i,n_j}\delta_{i,j} h (4\pi n_j)^{-2s-1}\Gamma(2s+1)
\\+ (-i)^k4^{-s-1}\pi^{-1}
n_i^{-2}
\frac{\Gamma(2s+1)}{\Gamma(s+k/2)\Gamma(s-k/2+2)}Z_{m_i,n_j}(s,\rho)_{ji} + R(s)_{ji},
\end{multline*}
where $R(s)_{ji}=O\left(\left|\operatorname{Re}(s)-\frac{1}{2}\right|^{-1}\right)$ is holomorphic over this region.
\end{lem}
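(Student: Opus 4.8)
The plan is to evaluate the left–hand side by the classical unfolding method and then read off the three pieces. Work first in the region $\re(s)>1$, where both $P_{m_i}(z,s,\rho,k,i)$ and $P_{n_j}(z,\bar s+2,\rho,k,j)$ are given by their absolutely convergent defining sums; the identity for $\frac12<\re(s)$ will then follow by analytic continuation, using the meromorphic continuation of $P_{m_i}$ and $P_{n_j}$ supplied by the functional equation~(\ref{recursion}) together with the fact (spectral theory of $\Delta_k$, as in~\cite{Selberg:1965vr,Pribitkin:2000jl}) that the resolvent $(\Delta_k+s(1-s))^{-1}$, hence each Poincar\'e series, is holomorphic on $\frac12<\re(s)<1$ except at finitely many simple poles on the real axis. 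Replacing $P_{m_i}(z,s,\rho,k,i)^T$ by its expansion over $\gamma\in\GG$ and performing the standard unfolding against the $\Gamma$-automorphy of $\overline{P_{n_j}(z,\bar s+2,\rho,k,j)}$ — where the unitarity of $\rho$ is precisely what makes the matrix factors $\bar\rho(\gamma)$ and the phases $\e^{\pm ik\arg(cz+d)}$ telescope — collapses $\intint_{\Gamma\backslash\bbH}$ together with the coset sum into the integral over $\Ginf\backslash\bbH$, i.e. the strip $0\le x<h$, $y>0$, paired against only the $\gamma=\mathrm{id}$ summand $\hat i\,\e(m_i z)\,y^{s}$. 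This also furnishes the continuation of $\Kl_{m_i,n_j}$ to $Z_{m_i,n_j}$ asserted in the statement.

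Next I would insert the $\GG$-expansion of $P_{n_j}(z,\bar s+2,\rho,k,j)$ into the strip integral and separate the coset $c=0$ (just the identity) from those with $c\neq 0$. The $c=0$ term contributes $\{\hat j{}^T\hat i\}\bigl(\int_0^h\e((m_i-n_j)x)\,dx\bigr)\bigl(\int_0^\infty\e^{-4\pi n_j y}y^{2s}\,dy\bigr)$; the factor $\hat j{}^T\hat i=\delta_{ij}$ forces $\mu_i=\mu_j$, hence $h(m_i-n_j)\in\Z$, so the $x$-integral is $h\,\delta_{m_i,n_j}$ and the $y$-integral is $(4\pi n_j)^{-2s-1}\Gamma(2s+1)$, giving the first term of the claimed formula. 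For the cosets with $c\neq 0$ I would represent them by $c>0$ and $d$ modulo $ch$ and unfold the $d$-sum, extending the $x$-integral over the strip to all of $\R$; the sum over $d\bmod ch$ of the surviving matrix entries $\{\rho(\gamma)^{-1}\}_{ji}$ weighted by the phases $\e(m_i\gamma\infty-n_j\gamma^{-1}\infty)$ is, by definition, the Kloosterman sum $S_{m_i,n_j}(c,\rho)_{ji}$, while the powers of $c$ produced by the substitution $z\mapsto -1/(c(cz+d))$ together with the Jacobian assemble into $c^{-2s}$; summing over $c>0$ produces $\Kl_{m_i,n_j}(s,\rho)_{ji}$, i.e. $Z_{m_i,n_j}(s,\rho)_{ji}$ after continuation.

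What remains of each $c\neq 0$ term is one $c$-independent archimedean integral, of the form
\[
\intint_{\R\times(0,\infty)}\e(m_i x)\,\e^{-2\pi m_i y}\,y^{s}\;\overline{\left(\frac{y^{\bar s+2}}{|z|^{2(\bar s+2)}}\left(\frac{z}{|z|}\right)^{-k}\e\!\left(\frac{-n_j}{z}\right)\right)}\frac{dx\,dy}{y^{2}},
\]
which is exactly the integral computed by Selberg and by Goldfeld--Sarnak and Pribitkin in the scalar case~\cite{Selberg:1965vr,Goldfeld:1983wq,Pribitkin:2000jl}: carrying out the inner $x$-integral produces a Bessel factor and the outer $y$-integral is of Weber--Schafheitlin type, whose leading contribution is $(-i)^{k}4^{-s-1}\pi^{-1}n_i^{-2}\,\Gamma(2s+1)/(\Gamma(s+k/2)\Gamma(s-k/2+2))$, everything else being absorbed into $R(s)_{ji}$. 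The main obstacle is this last step: isolating that leading Gamma-quotient and showing $R(s)_{ji}$ is holomorphic on $\frac12<\re(s)$ with $R(s)_{ji}=O(|\re(s)-\tfrac12|^{-1})$. For the holomorphy I would note that the explicit Gamma-quotient has no poles for $\re(s)>\frac12$ (the poles of $\Gamma(2s+1)$ all lie at $\re(s)\le-\tfrac12$) and that every pole of the left-hand side in this half-plane is already carried by $Z_{m_i,n_j}(s,\rho)_{ji}$, so that $R(s)$ — the subleading part of the archimedean integral, together with any boundary contribution from truncating the non-absolutely-convergent series $P_{m_i}$ near the cusp — is holomorphic there; for the size bound I would combine the Cauchy--Schwarz inequality applied to the left-hand side with the $L^2$ estimate of Lemma~\ref{lemmaone} (and the $O(1)$ bound for $P_{n_j}$ at $\bar s+2$), which in the band $|\im(s)|>1$ that is ultimately used forces the non-explicit remainder to be $O(|\re(s)-\tfrac12|^{-1})$; this last coupling is precisely what makes Lemma~\ref{lemmatwo} yield the bound of Theorem~\ref{Kllimit}.
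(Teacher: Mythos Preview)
Your unfolding and the extraction of the diagonal term $\delta_{m_i,n_j}\delta_{ij}h(4\pi n_j)^{-2s-1}\Gamma(2s+1)$ match the paper's argument, and so does the recognition of the Kloosterman sum in the $d$-sum. The genuine gap is in your treatment of $R(s)_{ji}$.

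First, the archimedean integral is \emph{not} $c$-independent as you wrote it: after unfolding, the $c\neq 0$ contribution carries the factor $\e\!\left(-m_i/(y c^{2}(x+i))\right)$ inside the integrand, so after pulling out $c^{-2s}$ there is still a residual $c$-dependence. The paper's device is to split this exponential as
\[
\e\!\left(\frac{-m_i}{yc^{2}(x+i)}\right)=1+\left(\e\!\left(\frac{-m_i}{yc^{2}(x+i)}\right)-1\right).
\]
The ``$1$'' piece gives a genuinely $c$-independent integral, evaluated via a Whittaker integral identity followed by a Mellin transform of $W_{\beta,\mu}$, which produces the Gamma quotient multiplying $Z_{m_i,n_j}(s,\rho)_{ji}$. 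The ``$(\e-1)$'' piece \emph{is} $R(s)_{ji}$: since $\e(u)-1=O(u)$, that integrand picks up an extra $c^{-2}$, and a direct estimate of the resulting integral gives $\ll c^{-2}/|\re(s)-\tfrac12|$. Summing against $|S_{m_i,n_j}(c,\rho)_{ji}|/c^{2\re(s)}$ one lands on the absolutely convergent series defining $\Kl$ at $\re(s)+1$, which is $O(1)$ uniformly on $\re(s)>\tfrac12$. This simultaneously yields the holomorphy of $R$ on $\re(s)>\tfrac12$ and the bound $R(s)_{ji}=O(|\re(s)-\tfrac12|^{-1})$, with no appeal to Lemma~\ref{lemmaone}.

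Your proposed route for the size of $R$---Cauchy--Schwarz on the left-hand side combined with Lemma~\ref{lemmaone}---is circular. Cauchy--Schwarz bounds the inner product, but to isolate $R(s)_{ji}$ you must subtract the $Z_{m_i,n_j}(s,\rho)_{ji}$ term, whose size is exactly what the whole machinery (Lemmas~\ref{lemmaone} and~\ref{lemmatwo} together) is set up to control; you cannot assume it is under control in the proof of Lemma~\ref{lemmatwo}. Likewise, your holomorphy argument (``every pole of the LHS is already carried by $Z$'') presupposes that $R$ is holomorphic, rather than proving it. The explicit integral representation of $R$ above is what makes both claims immediate.
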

\begin{proof}

We work directly from the definition of the non-holomorphic Poincar\'e series.  With a few changes of variables and exploiting one of the $\Gamma$ sums to extend the integral from the fundamental domain to all of $\mathcal H$, we can obtain,
\begin{multline*}
\intint_{\Gamma\backslash\mathcal H}P_{n_j}(z,\omega,\rho,j)^{-1} P_{m_i}(z,s,\rho,i)\frac{dxdy}{y^2}=
\delta_{m_i,n_j}\delta_{i,j}hn_j^{-2s-1}\Gamma(2s+1)+\\
\sum_{c\in\Z^+} \intint_\mathcal{H}
\e\left(-n_j(xy-iy)\right)
\frac{S_{m_i,n_j}(c,\rho)_{ji}}{|c|^{2s}}\times\\
\e\left(\frac{-m_i}{yc^2(x+i)}\right)\frac{y}{(x^2+1)^s}\left(\frac{x+i}{(x^2+1)^{1/2}}\right)^{-k}dxdy.
\end{multline*}

The expression can be simplified through the use of integral identities for the Whittaker function, $W$.  For $Y>0,$ and 
$\re(\alpha+\beta) >1$,~\cite{Maass:1983ug}
\begin{equation*}
\int^\infty_{-\infty} \frac{e(-Yx)}{(x+i)^\alpha(x-i)^\beta}dx = \frac{\pi(-i)^{\alpha-\beta}(\pi Y)^{(\alpha+\beta)/2-1}}{\Gamma(\alpha)}W_{(\alpha-\beta)/2,(\alpha+\beta-1)/2}(4\pi Y).
\end{equation*}
We also have the Mellin transform for $N>0$ and $\re(s+1/2\pm\mu)>0$,~\cite{Jeffrey:2007uy}
\begin{equation*}
\int^\infty_0 e(-Ny)y^{s-1}W_{\beta,\mu}(4\pi N y)dy = (4\pi N)^{-s}\frac{\Gamma(s+1/2+\mu)\Gamma(s+1/2-\mu)}{\Gamma(s-\beta+1)}.
\end{equation*}

Using these transformations, we can simplify to give
\begin{multline*}
\intint_{\Gamma\backslash\mathcal H}
\overline{P_{n_j}(z,\omega,\rho,j)}
P_{m_i}(z,s,\rho,i)^T\frac{dxdy}{y^2}=
\delta_{m_i,n_j}\delta_{i,j} h (4\pi n_j)^{-2s-1}\Gamma(2s+1)
\\+(-i)^k4^{-s-1}\pi^{-1}n_i^{-2}\frac{\Gamma(2s+1)}{\Gamma(s+k/2)\Gamma(s-k/2+2)}Z_{m_i,n_j}(s,\rho)_{ji} + R(s)_{ji}.
\end{multline*}
where
\begin{multline*}
R(s)_{ji}=\sum_{c>0}\intint_\mathcal{H}
\e\left(-n_j(xy-iy)\right)
\frac{S(m,n,c,\rho)}{|c|^{2s}}\times\\
\left(-1+\e\left(\frac{-m_i}{yc^2(x+i)}\right)\right)
\frac{y^2}{(x^2+1)^s}\left(\frac{x+i}{(x^2+1)^{1/2}}\right)^{-k}\frac{dxdy}{y}.
\end{multline*}

Using the fact that 
\begin{multline*}
\intint_\mathcal{H}\e\left(-n_j(xy-iy)\right)
\left(-1+\e\left(\frac{-m_i}{yc^2(x+i)}\right)\right)\times\\
\frac{y}{(x^2+1)^s}\left(\frac{x+i}{(x^2+1)^{1/2}}\right)^{-k}dxdy\ll\frac{c^{-2}}{\left|\operatorname{Re}(s)-\frac{1}{2}\right|},
\end{multline*}

we see that 

\begin{equation*}
|R(s)_{ji}| \leq\frac{1}{\left|\operatorname{Re}(s)-\frac{1}{2}\right|}\sum_{c>0} \frac{|S_{ m_i,n_j}(c,\rho)_{ji}|}{|c|^{2\re(s)+2}}.
\end{equation*}
componentwise.  The last part is the sum generating $Kl_{m_i,n_j}(\re(s)+1,\rho)_{ji}$, which converges absolutely uniformly over the range $\re(s)>\frac{1}{2}$.  Furthermore, the partial absolute sums are $O(1)$, where the constant depends on $\re(s)$ and $\Gamma$ \cite{Pribitkin:2000jl}.  Thus, $|R(s)|=O\left(\left|\operatorname{Re}(s)-\frac{1}{2}\right|^{-1}\right)$ and holomorphic over the region $\re(s)>\frac{1}{2}$.

$R(s)$ is holomorphic for $\operatorname{Re}(s)>\frac{1}{2}$ and $R(s)\in O\left(\frac{1}{\operatorname{Re}(s)-\frac{1}{2}}\right)$.
\end{proof}


The theorem follows directly from Stirling's approximation and these lemmas.

%
%

\section{Limiting Behavior of Eisenstein Series}\label{otherproof}

To obtain bounds on the Kloosterman-Selberg zeta function, we express the Fourier coefficients of certain vector-valued Eisenstein series in terms of the zeta functions.  Decomposing the vector-valued Eisenstein series into the sum of scalar-valued Eisenstein series evaluated at specific cusps will allow us to apply the standard theory of Eisenstein series to obtain bounds on the Fourier coefficients.  Background on the behavior of Eisenstein series can be found in~\cite{Bruggeman:1994uha} and~\cite{Kubota:1973vda}.

We consider only the weight 0 case, because for higher weights the trivial estimate suffices for the convergence result in Appendix~\ref{kloosterman}.  Consider a Fuchsian group, $\Gamma$ with finite volume and a cusp at infinity and a normal multiplier system $\rho$ of weight $0$ such that its $\mu_i=0$ .We can define the vector-valued non-analytic Eisenstein series of $\Gamma$ and $\rho$ analogously to the Poincar\'e series,
\begin{equation*}
\vec E(z,s,\rho,0,i) = \sum_{\gamma\in\GG}\rho(\gamma)^{-1} \frac{y^s}{|cz+d|^{2s}}\hat i
\end{equation*}

Now let $\Gamma_i\in\Gamma$ be the preimage under $\rho$ of the diagonal matrices in $\GL$ such that the $i$th coefficient along the diagonal is 1.  Since $\rho$ is normal with $\alpha_i=0$, we know that $\Gamma_i\supset\Ginf$ and that $\Gamma_i\backslash\Gamma$ is finite.  Fix a set of coset representatives $\beta_r$ such that $\bigcup_r \Gamma_i\alpha_r = \Gamma$.  We can separate out the dependence on the non-trivial parts of the multiplier system.

\begin{align*}
\vec E(z,s,\rho,0,i)& = \sum_r \rho(\beta_r)^{-1} \sum_{\gamma\in\beta_r\Gamma_\infty\beta_r^{-1}\backslash\Gamma_i}
\rho(\gamma)^{-1} \frac{\im(\beta_rz)^s}{|c\alpha_rz+d|^{2s}}\hat i\\
&=\sum_r \rho(\beta_r)^{-1}\hat i\cdot E_{\Gamma_i}(\beta_rz,s,1,0).
\end{align*}

We are using the scalar Eisenstein sum  $E_{\Gamma_i}(\beta_rz,s,1,0)$ over ${\Gamma_i} $ as opposed to over $\Gamma$ and evaluated at a different cusp.  Through standard techniques,~\cite{Bruggeman:1994uha} we know that each of the $E_{\Gamma_D}$ permits an analytic continuation in $s$ to some $1-a<\re(s)<1+a$ except for a pole at $1$.  Moreover, we know that the Fourier coefficients of the scalar Eisenstein series in the sum are bounded over the strip as $\im(s)\to\infty$\cite{Duncan:2011uj}, so the Fourier coefficients in $\vec E(z,s,\rho,0,i)$ are bounded component wise.

On the other hand, the Fourier coefficients of the non-analytic Eisenstein series can be expressed in terms of the Kloosterman-Selberg zeta functions.  Decomposing $z = x+iy$ and using an appropriate unfolding and change of variables, we can see
\begin{align*}
a^{m}_j(y) &= \int_0^h E(z,s,\rho,0,i)_je\left(-m\frac{x}{h}\right)dx\\
&=\int_0^h \sum_{\gamma\in\GG}\left\{\rho(\gamma)^{-1}\right\}_{ji}\frac{y^s}{|cz+d|^{2s}}e\left(-m\frac{x}{h}\right)dx\\
&=\delta_{0,m}\delta_{j,i}y^s +\sum_{\gamma\in\GGstarG} \left\{\rho(\gamma)^{-1}\right\}_{ji}\ \int_{-\infty}^\infty\frac{y^s}{|cz+d|^{2s}}e\left(-m\frac{x}{h}\right)dx\\
&=\delta_{0,m}\delta_{j,i}y^s +\sum_c\frac{1}{|c|^{2s}}\sum_d\left\{\rho(\gamma)^{-1}\right\}_{ji}e\left((m)\frac{d}{ch}\right) \int_{-\infty}^\infty  \frac{y^s}{|x^2+y^2|^{s}}e\left(-m\frac{x}{h}\right)dx\\
&=\delta_{0,m}\delta_{j,i}y^s +y^{1-s}Kl_{0,m}(s,\rho)\int_{-\infty}^\infty \frac{e\left(-myt\right)}{(1+t^2)^s}dt\\
&=Kl_{0,m}(s,\rho)2\pi^s|m|^{s-\frac{1}{2}}\Gamma(s)^{-1}y^{\frac{1}{2}}K_{s-\frac{1}{2}}(2\pi|m|y),
\end{align*}
unless $m=0$, in which case,
\begin{equation*}
a^0_j(y) = \delta_{j,i}y^s +y^{1-s}Kl_{0,0}(s,\rho)\frac{\Gamma\left(\frac{1}{2}\right)}{\Gamma(s)}.
\end{equation*}

Here we have used the modified Bessel function $K_s(z)$.  Moreover, this equation holds for the analytic continuations of $E$ and $Kl$.  There exists an $a>0$, such that for fixed $1-a<\re s$, fixed $z$, and sufficiently large $\im s$ and $z$, the modified Bessel function is bounded below as $\im s\to\infty$ by\cite{Abramowitz:1972vw}.

\begin{equation*}
\left|\frac{\Gamma(s)}{K_{s-\frac{1}{2}}(z)} \right|= O\left(\sqrt{|s|}\right),
\end{equation*}
where the constant is independent of $z$.

This result leads immediately to a bound on the Kloosterman-Selberg zeta function:
\begin{thm*}
Let $\rho$ be a normal multiplier system of even weight with $\alpha_i=0$.  Assume that $m_i\neq 0$,and $1-a<\re s$, then $|Kl_{0,m_j}(s,\rho)_{ji}|=O\left(\sqrt{|s|}\right)$, for sufficiently large $\im s$ where the constant may depend on $\Gamma$, $\rho$, and $m_j$.
\end{thm*}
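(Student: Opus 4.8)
The plan is to read the bound off directly from the Fourier expansion of $\vec E(z,s,\rho,0,i)$ established above. Solving the identity $a^m_j(y)=Kl_{0,m}(s,\rho)_{ji}\,2\pi^s|m|^{s-\frac12}\Gamma(s)^{-1}y^{\frac12}K_{s-\frac12}(2\pi|m|y)$ for the zeta value, one gets, for any fixed $y>0$ and $m=m_j\neq 0$,
\[
Kl_{0,m_j}(s,\rho)_{ji}=a^{m_j}_j(y)\,\frac{\Gamma(s)}{2\pi^s|m_j|^{s-\frac12}\,y^{\frac12}\,K_{s-\frac12}(2\pi|m_j|y)},
\]
an identity which, as noted above, persists for the meromorphic continuations of $E$ and $Kl$ to the strip $1-a<\re s<1+a$ (with the only possible singularity at $s=1$). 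First I would fix a value of $y$ large enough — depending on $\Gamma,\rho,m_j$ — that the cited asymptotic $|\Gamma(s)/K_{s-\frac12}(z)|=O(\sqrt{|s|})$ applies with $z=2\pi|m_j|y$; then the bound will follow by estimating the three factors on the right separately and absorbing the (now fixed) $y$ into the implied constant.

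The elementary pieces are the factor $\bigl|2\pi^s|m_j|^{s-\frac12}y^{\frac12}\bigr|^{-1}=(2y^{\frac12})^{-1}\pi^{-\re s}|m_j|^{\frac12-\re s}$, which is bounded uniformly for $\re s$ in the compact strip $1-a\le\re s\le 1+a$ with a constant depending on $m_j$ and $y$; and $|\Gamma(s)/K_{s-\frac12}(2\pi|m_j|y)|=O(\sqrt{|s|})=O(\sqrt{|\im s|})$ by the Bessel estimate recorded just above the theorem. The remaining factor is $a^{m_j}_j(y)$, and here I would invoke the scalar theory: from the decomposition $\vec E(z,s,\rho,0,i)=\sum_r\rho(\beta_r)^{-1}\hat i\,E_{\Gamma_i}(\beta_r z,s,1,0)$, the coefficient $a^{m_j}_j(y)$ is a finite linear combination of matrix entries of the $\rho(\beta_r)^{-1}$ times $m_j$-th Fourier coefficients (at the relevant cusp and argument) of the scalar Eisenstein series $E_{\Gamma_i}(\cdot,s,1,0)$; the standard bounds on Fourier coefficients of scalar Eisenstein series on their continuation strip (as in~\cite{Duncan:2011uj}, and~\cite{Bruggeman:1994uha,Kubota:1973vda}) give $a^{m_j}_j(y)=O(1)$ as $\im s\to\infty$, uniformly in the strip, with constant depending on $\Gamma,\rho,m_j,y$. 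Multiplying the three estimates yields $|Kl_{0,m_j}(s,\rho)_{ji}|=O(\sqrt{|s|})$ for sufficiently large $\im s$, which is the claim.

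The step I expect to be the main obstacle is precisely the uniform boundedness of $a^{m_j}_j(y)$ over the strip as $\im s\to\infty$: it rests on choosing $a>0$ small enough that each scalar $E_{\Gamma_i}(\beta_r z,s,1,0)$ continues meromorphically to $1-a<\re s<1+a$ with no singularity other than the simple pole at $s=1$ (i.e.\ no resonances from eigenvalues of the associated Laplacian intruding), and on the classical fact that the Fourier coefficients then stay bounded on vertical lines in that strip. This is standard in the scalar setting, but it is where the choice of $a$ is really being used. Everything else — solving for $Kl$, the trivial bound on $\pi^s|m_j|^{s-1/2}$, and the quoted Bessel asymptotic — is routine. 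Finally I would remark, as indicated in the text, that this $m_i=0$ case suffices for Theorem~\ref{cor00}, since the case $m_in_j=0$ with $n_j\neq 0$ follows by the same argument with the roles of $m_i$ and $n_j$ (and $\rho$ replaced by $\bar\rho$) interchanged.
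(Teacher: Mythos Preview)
Your proposal is correct and follows essentially the same route as the paper: express the Fourier coefficient $a^{m_j}_j(y)$ of the vector-valued Eisenstein series in terms of $Kl_{0,m_j}(s,\rho)_{ji}$, invoke the decomposition into scalar Eisenstein series to get $a^{m_j}_j(y)=O(1)$ on the strip, and then apply the quoted Bessel estimate $|\Gamma(s)/K_{s-1/2}(z)|=O(\sqrt{|s|})$ to conclude. The paper condenses all of this into the phrase ``this result leads immediately to a bound,'' while you have spelled out the three factors and the role of the fixed $y$; there is no substantive difference in strategy.
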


\bibliography{VOAsRad}{}
\bibliographystyle{plain}


\end{document}